\DeclareMathOperator{\Gal}{Gal}
\DeclareMathOperator{\id}{id}
\DeclareMathOperator{\Img}{Im}
\DeclareMathOperator{\Lyn}{Lyn}
\DeclareMathOperator{\res}{res}
\DeclareMathOperator{\Sh}{Sh}
\DeclareMathOperator{\trg}{trg}
\DeclareFontFamily{U}{wncy}{}
\DeclareFontShape{U}{wncy}{m}{n}{<->wncyr10}{}
\DeclareSymbolFont{mcy}{U}{wncy}{m}{n}
\DeclareMathSymbol{\Sha}{\mathord}{mcy}{"58}
\DeclareMathSymbol{\sha}{\mathord}{mcy}{"78}
\begin{document}

\newtheorem{thm}{Theorem}[section]
\newtheorem{cor}[thm]{Corollary}
\newtheorem{lem}[thm]{Lemma}
\newtheorem{prop}[thm]{Proposition}
\newtheorem{defin}[thm]{Definition}
\newtheorem{exam}[thm]{Example}
\newtheorem{examples}[thm]{Examples}
\newtheorem{rem}[thm]{Remark}
\newtheorem{case}{\sl Case}
\newtheorem{claim}{Claim}
\newtheorem{prt}{Part}
\newtheorem*{mainthm}{Main Theorem}
\newtheorem*{thmA}{Theorem A}
\newtheorem*{thmB}{Theorem B}
\newtheorem*{thmC}{Theorem C}
\newtheorem*{thmD}{Theorem D}
\newtheorem{question}[thm]{Question}
\newtheorem*{notation}{Notation}
\swapnumbers
\newtheorem{rems}[thm]{Remarks}
\newtheorem*{acknowledgment}{Acknowledgment}

\newtheorem{questions}[thm]{Questions}
\numberwithin{equation}{section}

\newcommand{\Bock}{\mathrm{Bock}}
\newcommand{\dec}{\mathrm{dec}}
\newcommand{\diam}{\mathrm{diam}}
\newcommand{\dirlim}{\varinjlim}
\newcommand{\discup}{\ \ensuremath{\mathaccent\cdot\cup}}
\newcommand{\divis}{\mathrm{div}}
\newcommand{\gr}{\mathrm{gr}}
\newcommand{\nek}{,\ldots,}
\newcommand{\indec}{{\rm indec}}
\newcommand{\Ind}{\mathrm{Ind}}
\newcommand{\inv}{^{-1}}
\newcommand{\isom}{\cong}
\newcommand{\Massey}{\mathrm{Massey}}
\newcommand{\ndiv}{\hbox{$\,\not|\,$}}
\newcommand{\nil}{\mathrm{nil}}
\newcommand{\pr}{\mathrm{pr}}
\newcommand{\sep}{\mathrm{sep}}
\newcommand{\sh}{\mathrm{sh}}
\newcommand{\SR}{\mathrm{SR}}
\newcommand{\tagg}{^{''}}
\newcommand{\tensor}{\otimes}
\newcommand{\alp}{\alpha}
\newcommand{\gam}{\gamma}
\newcommand{\Gam}{\Gamma}
\newcommand{\del}{\delta}
\newcommand{\Del}{\Delta}
\newcommand{\eps}{\epsilon}
\newcommand{\lam}{\lambda}
\newcommand{\Lam}{\Lambda}
\newcommand{\sig}{\sigma}
\newcommand{\Sig}{\Sigma}
\newcommand{\bfA}{\mathbf{A}}
\newcommand{\bfB}{\mathbf{B}}
\newcommand{\bfC}{\mathbf{C}}
\newcommand{\bfF}{\mathbf{F}}
\newcommand{\bfP}{\mathbf{P}}
\newcommand{\bfQ}{\mathbf{Q}}
\newcommand{\bfR}{\mathbf{R}}
\newcommand{\bfS}{\mathbf{S}}
\newcommand{\bfT}{\mathbf{T}}
\newcommand{\bfZ}{\mathbf{Z}}
\newcommand{\dbA}{\mathbb{A}}
\newcommand{\dbC}{\mathbb{C}}
\newcommand{\dbF}{\mathbb{F}}
\newcommand{\dbN}{\mathbb{N}}
\newcommand{\dbQ}{\mathbb{Q}}
\newcommand{\dbR}{\mathbb{R}}
\newcommand{\dbU}{\mathbb{U}}
\newcommand{\dbZ}{\mathbb{Z}}
\newcommand{\grf}{\mathfrak{f}}
\newcommand{\gra}{\mathfrak{a}}
\newcommand{\grA}{\mathfrak{A}}
\newcommand{\grB}{\mathfrak{B}}
\newcommand{\grh}{\mathfrak{h}}
\newcommand{\grI}{\mathfrak{I}}
\newcommand{\grL}{\mathfrak{L}}
\newcommand{\grm}{\mathfrak{m}}
\newcommand{\grp}{\mathfrak{p}}
\newcommand{\grq}{\mathfrak{q}}
\newcommand{\grR}{\mathfrak{R}}
\newcommand{\calA}{\mathcal{A}}
\newcommand{\calB}{\mathcal{B}}
\newcommand{\calC}{\mathcal{C}}
\newcommand{\calE}{\mathcal{E}}
\newcommand{\calG}{\mathcal{G}}
\newcommand{\calH}{\mathcal{H}}
\newcommand{\calK}{\mathcal{K}}
\newcommand{\calL}{\mathcal{L}}
\newcommand{\calM}{\mathcal{M}}
\newcommand{\calW}{\mathcal{W}}
\newcommand{\calV}{\mathcal{V}}

\title[The lower $p$-central series and shuffle relations]
{The lower $p$-central series of a free profinite group and the shuffle algebra}

\author{Ido Efrat}
\address{Mathematics Department\\
Ben-Gurion University of the Negev\\
P.O.\ Box 653, Be'er-Sheva 84105\\
Israel} \email{efrat@math.bgu.ac.il}

\keywords{Shuffle algebra, lower $p$-central sequence, Galois cohomology, unipotent upper-triangular representations, Massey products, absolute Galois groups}
\subjclass[2010]{Primary 12G05; Secondary 68R15, 12F10, 12E30}

\maketitle
\begin{abstract}
For a prime number $p$ and a free profinite group $S$ on the basis $X$, let $S^{(n,p)}$, $n=1,2\nek$ be the  lower $p$-central filtration of $S$.
For $p>n$, we give a combinatorial description of $H^2(S/S^{(n,p)},\dbZ/p)$ in terms of the Shuffle algebra on $X$.
\end{abstract}

\section{Introduction}
For a fixed prime number $p$ and a profinite group $G$, the \textsl{lower $p$-central filtration} $G^{(n,p)}$, $n=1,2,3\nek$ of $G$ is defined inductively by
\[
G^{(1,p)}=G,\quad G^{(n+1,p)}=(G^{(n,p)})^p[G^{(n,p)},G].
\]
Thus $G^{(n+1,p)}$ is the closed subgroup of $G$ generated by all powers $g^p$ and commutators $[g,h]=g\inv h\inv gh$, with $g\in G^{(n,p)}$ and $h\in G$.
The subgroups $G^{(n,p)}$ are normal in $G$, and we denote $G^{[n,p]}=G/G^{(n,p)}$.
We write $H^i(G)=H^i(G,\dbZ/p)$ for the profinite cohomology group of $G$ with respect to its trivial action on $\dbZ/p$.

In this paper we consider a free profinite group $S$ on the basis $X$.
For $p>n$, we give an explicit combinatorial description of  $H^2(S^{[n,p]})$ in terms of the \textsl{shuffle algebra} $\Sh_\dbZ(X)$ over $X$.
To state it, denote the free monoid  on $X$ by $X^*$, and consider its elements as associative words in the alphabet $X$.
Let $\dbZ\langle X\rangle$ be the free $\dbZ$-module over the basis $X^*$.
The {\sl shuffle product} of words $u=(x_1\cdots x_r)$ and $v=(x_{r+1}\cdots x_{r+t})$ is defined by
\[
u\sha v=\sum_\sig(x_{\sig(1)}\cdots x_{\sig(r+t)})\in\dbZ\langle X\rangle,
\]
where $\sig$ ranges over all permutations of $1,2\nek r+t$ such that $\sig(1)<\cdots<\sig(r)$ and $\sig(r+1)<\cdots<\sig(r+t)$.
This extends by linearity to an associative and commutative bilinear map $\sha$ on $\dbZ\langle X\rangle$, and
the shuffle algebra $\Sh_\dbZ(X)$ is the resulting $\dbZ$-algebra.

Next let $M$ be the submodule of $\dbZ\langle X\rangle$ generated by all products $u\sha v$ such that $u,v\in X^*$ are nonempty words.
The $\dbZ$-module $\Sh_\dbZ(X)_{\rm indec}=\dbZ\langle X\rangle/M$ is called the {\sl indecomposable quotient} of $\Sh_\dbZ(X)$.
The $\dbZ$-module $\Sh_\dbZ(X)$, and therefore also it quotient $\Sh_\dbZ(X)_{\rm indec}$, are graded by the length $|w|$ of words $w$.
Our main result states that, for $p$ sufficiently large, all relations in $H^2(S^{[n,p]})$  are formal consequences of shuffle relations:

\begin{mainthm}
\label{main theorem}
For every integer $n\geq2$ and a prime number $p>n$ there is a canonical isomorphism
\[
\Bigl(\bigoplus_{s=1}^n\Sh_\dbZ(X)_{{\rm indec},s}\Bigr)\tensor(\dbZ/p)\xrightarrow{\ \ \sim\ \
}H^2(S^{[n,p]}).
\]
\end{mainthm}

This isomorphism is based on a general construction of cohomological elements $\rho^*\gam_{n,s}$ in $H^2(G)$, where $G$ is an arbitrary profinite group.
These elements are constructed as certain pullbacks by unipotent upper-triangular representations $\rho$ of $G$, and we refer to \S\ref{section on pullbacks} for their precise definition.
They include Bockstein elements, cup products, and more generally, elements of Massey products in $H^2(G)$ (Examples \ref{example Bockstein} and \ref{example Massey}).
The case where $G=S^{[n,p]}$, for $S$ a free profinite group, is of a generic nature, but the cohomological elements $\rho^*\gam_{n,s}$ are interesting also for profinite groups $G$ of \textsl{arithmetical} origin.
In particular, in \S\ref{section on Galois groups} we take $G$ to be the absolute Galois group of a field containing a root of unity of order $p$.
Then the Merkurjev--Suslin theorem implies that each element of  $H^2(G)$ decomposes as a sum of cup products.
However the decomposition pattern of the $\rho^*\gam_{n,s}$ is known explicitly only in very few cases, and these decompositions correspond to deep facts in Galois cohomology, which in turn provide restrictions on the possible group-theoretic structure of $G$ (see \S \ref{section on Galois groups}).

The proof of Theorem \ref{main theorem} leans heavily on our previous paper \cite{Efrat17}, where we construct for every word $w\in X^*$ of length $\leq n$  an element $\alp_{w,n}$ of $H^2(S^{[n,p]})$, and show that the elements $\alp_{w,n}$, where $w$ is a {\sl Lyndon word}, form a linear basis of $H^2(S^{[n,p]})$ (see \S\S\ref{section on words}-\ref{section on shuffle relations} for details).
The elements $\alp_{w,n}$ are special instances of the above-mentioned pullbacks $\rho^*\gam_{n,s}$, where the representation $\rho=\bar\rho^w$ is constructed in this case using \textsl{Magnus theory} (see \S\ref{section on shuffle relations}).
The existence of an \textsl{epimorphism} $\Bigl(\bigoplus_{s=1}^n\Sh_\dbZ(X)_{\indec,s}\Bigr)\tensor(\dbZ/p)\to H^2(S^{[n,p]})$ is a formal consequence of the results of \cite{Efrat17}.
It was further shown there, by an explicit computation, that this epimorphism is an isomorphism for $n=2,p>2$ and for $n=3,p>3$.
Thus the present paper settles this problem in general, by proving the injectivity of the epimorphism for $p>n$.
Note that for $n=p$ it need not be injective -- see Example \ref{counterexamples}.
The main new ingredient in the current paper, which leads to the proof of the injectivity, is the use of the Radford polynomials (discussed in \S\ref{section on words}).

When $n=2$, the quotient $S^{[2,p]}$ is an elementary abelian $p$-group.
Then the Main Theorem recovers the known fact that for an elementary abelian $p$-group $\bar S$, with $p$ odd, one has
$H^1(\bar S)\oplus\bigwedge^2 H^1(\bar S) \xrightarrow{\sim}H^2(\bar S)$ via the map which is the Bockstein homomorphism on the first direct summand, and is induced by the cup product on the second summand (\cite{EfratMinac11}*{\S2D)}, \cite{Topaz16}*{Fact 8.1}).
In this sense our work traces back to the pioneering work of Labute on the duality between $S^{(2,p)}/S^{(3,p)}$ and the second cohomology \cite{Labute67}.

I thank Claudio Quadrelli, Jacob Stix, and Adam Topaz for discussions related to this work.

\section{Upper-triangular unipotent matrices}
\label{section on unitriangular matrices}
For a unital commutative ring $R$, let $\dbU_s(R)$ denote the group of all upper-triangular unipotent $(s+1)\times(s+1)$-matrices over $R$.

We consider integers $n,s$ such that $n\geq2$ and $1\leq s\leq n$.
Let $R=\dbZ/p^{n-s+1}$  and $\dbU=\dbU_s(R)$.
We write $I_{s+1}$ for the unit matrix in $\dbU$, and $E_{i,j}$ for the $(s+1)\times(s+1)$-matrix over $R$ which is $1$ at entry $(i,j)$, and is $0$ elsewhere.

\begin{lem}
\label{filtration of U}
The lower $p$-central filtration of $\dbU$ satisfies
\begin{enumerate}
\item[(a)]
$\dbU^{(n,p)}\isom\dbZ/p$;
\item[(b)]
$\dbU^{(n,p)}$ lies in the center of $\dbU$;
\item[(c)]
$\dbU^{(n+1,p)}=1$.  
\end{enumerate}
\end{lem}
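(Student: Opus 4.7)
My approach is to identify the lower $p$-central filtration of $\dbU$ with an explicit combinatorial filtration that mixes the classical ``diagonal depth'' in $\calN$ with the $p$-adic valuation on $R$. Let $\calN$ denote the $R$-module of strictly upper triangular $(s+1)\times(s+1)$-matrices over $R$, and for $k\geq1$ set
\[
F_k\calN = \bigl\{N \in \calN \,:\, N_{ij} \in p^{\max(0,\,k-(j-i))}R \text{ for all } i<j\bigr\},
\]
with the convention $p^0 R = R$. Straightforward matrix-product bookkeeping gives $F_k\calN \cdot F_l\calN \subseteq F_{k+l}\calN$ and $p\,F_k\calN \subseteq F_{k+1}\calN$, which together imply that $V_k := I_{s+1} + F_k\calN$ is a subgroup of $\dbU$ (for the inverse, expand $(I+M)^{-1} = I - M + M^2 - \cdots$).

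The main step is the inductive inclusion $\dbU^{(k,p)} \subseteq V_k$, starting from $\dbU^{(1,p)} = V_1 = \dbU$. Assuming $\dbU^{(k,p)} \subseteq V_k$, take $U = I+M \in V_k$ and $V = I+N \in V_1$. Expanding
\[
U^p = I + \sum_{j=1}^{s} \binom{p}{j}\, M^j
\]
(truncated because $M^{s+1}=0$), each summand lies in $F_{k+1}\calN$: for $1 \leq j \leq p-1$ one has $p \mid \binom{p}{j}$ together with $M^j \in F_{jk}\calN$, while for $j=p$ (if it occurs) $M^p \in F_{pk}\calN \subseteq F_{k+1}\calN$, and terms with $j>p$ vanish. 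The identity $UV = VU\,[U,V]$ rearranges to $[U,V] = I + (VU)^{-1}(MN-NM)$, and $MN-NM \in F_{k+1}\calN$, so $[U,V] \in V_{k+1}$. Hence $\dbU^{(k+1,p)} = (\dbU^{(k,p)})^p[\dbU^{(k,p)},\dbU] \subseteq V_{k+1}$.

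Evaluating the filtration at the top: every pair $i < j \leq s+1$ satisfies $j - i \leq s$, so $n+1-(j-i) \geq n-s+1$ forces $p^{n+1-(j-i)}R = 0$; hence $F_{n+1}\calN = 0$ and $\dbU^{(n+1,p)} \subseteq V_{n+1} = 1$, giving (c). Similarly $F_n\calN = p^{n-s}R \cdot E_{1,s+1}$, so
\[
V_n = \bigl\{I + \lambda p^{n-s} E_{1,s+1} \,:\, \lambda \in \dbZ/p\bigr\}
\]
is cyclic of order $p$. Since $E_{1,s+1}\cdot N = N\cdot E_{1,s+1} = 0$ for every $N \in \calN$ (the relevant row and column of $N$ vanish), $V_n$ lies in the center of $\dbU$, and the inclusion $\dbU^{(n,p)} \subseteq V_n$ then yields (b).

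It remains to prove the reverse inclusion $V_n \subseteq \dbU^{(n,p)}$, i.e.\ that $I + p^{n-s}E_{1,s+1}$ actually lies in $\dbU^{(n,p)}$; this is the one step that is not formal bookkeeping. A short direct computation, using $E_{ab}E_{cd} = \delta_{bc}E_{ad}$ and $E_{ab}^2=0$ for $a<b$, shows $[I+E_{ab},\,I+E_{bc}] = I + E_{ac}$ whenever $a<b<c$. Nesting these commutators along $I+E_{1,2},\, I+E_{2,3},\ldots, I+E_{s,s+1}$ places $I+E_{1,s+1}$ in the $s$-th term of the lower central series of $\dbU$, hence in $\dbU^{(s,p)}$. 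Because $E_{1,s+1}^2 = 0$, raising to the $p$-th power repeatedly gives $I + p^{\ell}E_{1,s+1} \in \dbU^{(s+\ell,p)}$ for every $\ell \geq 0$, and taking $\ell = n-s$ completes (a). The construction of this explicit witness is the main obstacle; everything else is careful filtration bookkeeping.
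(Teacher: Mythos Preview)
Your argument is correct and self-contained, whereas the paper's own proof simply imports the identification $\dbU^{(n,p)}=I_{s+1}+\dbZ p^{n-s}E_{1,s+1}$ (and its centrality) from \cite{Efrat17}*{Prop.\ 6.3}, and then deduces (c) from (a) and (b) via $\dbU^{(n+1,p)}=(\dbU^{(n,p)})^p[\dbU^{(n,p)},\dbU]=1$. What you do differently is reconstruct that cited result from scratch: the mixed filtration $F_k\mathcal{N}$ weighting the $(i,j)$-entry by $p^{\max(0,k-(j-i))}$ is exactly the right bookkeeping device, and your verification that $p$-th powers and commutators respect it is clean (the case split on whether $j<p$, $j=p$, or $j>p$ in the binomial expansion is handled correctly, since $\binom{p}{j}=0$ for $j>p$ and $F_{pk}\mathcal{N}\subseteq F_{k+1}\mathcal{N}$ for $k\geq1$). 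The reverse inclusion via the nested commutator $[I+E_{1,2},[I+E_{2,3},\ldots]]=I+E_{1,s+1}$ followed by iterated $p$-th powers is the same explicit witness underlying the cited proposition. The payoff of your route is that the lemma becomes independent of the earlier paper; the payoff of the paper's route is brevity, since the filtration computation has already been carried out elsewhere.
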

\begin{proof}
(a) \quad
By  \cite{Efrat17}*{Prop.\ 6.3(a)}, $\dbU^{(n,p)}=I_{s+1}+\dbZ p^{n-s}E_{1,s+1}$.
Therefore there is an isomorphism
\[
\dbU^{(n,p)}\to\dbZ/p, \quad  I_{s+1}+kp^{n-s}E_{1,s+1}\mapsto k\pmod p.
\]

(b) \quad
See \cite{Efrat17}*{Prop.\ 6.3(b)}.

\medskip

(c) \quad
By  (a) and (b), $\dbU^{(n+1,p)}=(\dbU^{(n,p)})^p[\dbU^{(n,p)},\dbU]=1$.
\end{proof}

We obtain a central extension
\begin{equation}
\label{exact sequence for U}
0\to\dbZ/p\to\dbU\xrightarrow{\lam}\dbU^{[n,p]}\to1
\end{equation}
where $\lam$ is the canonical epimorphism.
This extension corresponds to a cohomology element $\gam_{n,s}\in H^2(\dbU^{[n,p]})$, in the sense of the Schreier correspondence (see e.g., \cite{NeukirchSchmidtWingberg}*{Th.\ 1.2.4}).

Next we compute the exponent of $\dbU$, based on an argument by W.\ Sawin \cite{Sawin18}.
First we record a few elementary facts about binomial coefficients.

\begin{prop}
\label{binomial coef}
\begin{enumerate}
\item[(a)]
Let $l,k$ be integers such that $1\leq l\leq p^k$.
Then $p^k/\gcd(p^k,l)$ divides $\binom {p^k}l$.
\item[(b)]
Let $l,k$ be integers such that $1\leq l\leq p^k$ and $l$ is a $p$-power.
Then $p^k/l$ is the maximal $p$-power dividing $\binom {p^k}l$.
\item[(c)]
Let $t,k$ be integers such that $1\leq t\leq p^k$, and let $q$ be a $p$-power.
Then  $q|\binom{p^k}l$, $l=1,2\nek t$, if and only if $qp^{\lfloor\log_pt\rfloor}|p^k$.
\item[(d)]
For $m\geq1$, one has $p|\binom ml$, $l=1,2\nek m-1$, if and only if $m$ is a $p$-power.
\end{enumerate}
\end{prop}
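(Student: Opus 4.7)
The plan is to treat (a) and (b) as the computational foundation, derive (c) from their combination, and dispatch (d) using Lucas's theorem.

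For (a), I would start from the symmetry identity $l\binom{p^k}l=p^k\binom{p^k-1}{l-1}$; setting $d=\gcd(p^k,l)$, dividing both sides by $d$, and observing that $\gcd(l/d,p^k/d)=1$ forces $p^k/d$ to divide $\binom{p^k}l$. For (b), I would write $l=p^j$ with $0\leq j\leq k$ and apply Legendre's formula $v_p(n!)=(n-s_p(n))/(p-1)$, where $s_p(n)$ is the base-$p$ digit sum, to each factorial in $\binom{p^k}{p^j}=(p^k)!/\bigl((p^j)!\,(p^k-p^j)!\bigr)$. Since $p^k-p^j=p^j(p^{k-j}-1)$ has digit sum $(k-j)(p-1)$, a short arithmetic step yields $v_p\bigl(\binom{p^k}{p^j}\bigr)=k-j$ exactly, which is the claimed maximal $p$-power.

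Part (c) is the crux. Let $j=\lfloor\log_pt\rfloor$, so $p^j$ is the largest $p$-power in $\{1,\ldots,t\}$. For every $l$ in this range one has $v_p(l)\leq j$ (otherwise $l\geq p^{j+1}>t$), hence $v_p\bigl(\binom{p^k}l\bigr)\geq k-v_p(l)\geq k-j$ by (a), with equality at $l=p^j$ by (b). Therefore $q\mid\binom{p^k}l$ simultaneously for all $l\in\{1,\ldots,t\}$ is equivalent to $v_p(q)\leq k-j$, which rearranges to $qp^{\lfloor\log_pt\rfloor}\mid p^k$.

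For (d), I would invoke Lucas's theorem $\binom ml\equiv\prod_i\binom{m_i}{l_i}\pmod p$, where $m=\sum m_ip^i$ and $l=\sum l_ip^i$ are the base-$p$ expansions. If $m=p^k$, then the unique nonzero digit of $m$ sits at position $k$, so any $l$ with $0<l<m$ has a positive digit at some position $i<k$ where $m_i=0$, producing a vanishing Lucas factor. Conversely, if $m$ is not a $p$-power then either some digit $m_j$ is $\geq 2$ or $m$ has at least two nonzero digits; in the first case $l=p^j$ gives $\binom ml\equiv m_j\not\equiv 0\pmod p$, and in the second case taking $l=p^j$ for the smallest nonzero digit position $j$ again yields all Lucas factors nonzero. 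The main obstacle is establishing sharpness in (c), which reduces entirely to the exact $p$-adic valuation computed in (b); everything else is routine bookkeeping.
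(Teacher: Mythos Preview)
Your argument is correct and follows the same architecture as the paper: (a) via the identity $l\binom{p^k}{l}=p^k\binom{p^k-1}{l-1}$, then (c) by combining the lower bound from (a) with the sharpness at $l=p^{\lfloor\log_pt\rfloor}$ from (b). The only difference is that for (b) and (d) the paper simply cites external references (\cite{RotaSagan80}*{Cor.\ 4.3} and \cite{Fine47}, respectively), whereas you supply self-contained proofs via Legendre's digit-sum formula and Lucas's theorem; your computations there are accurate and make the proposition independent of those sources.
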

\begin{proof}
(a) \quad
Let $v_p$ be the $p$-adic valuation.
As $\binom {p^k}l=\frac{p^k}l\binom{p^k-1}{l-1}$, we have $v_p\bigl(\binom {p^k}l\bigr)\geq k-v_p(l)=v_p(p^k/\gcd(p^k,l))$, and the assertion follows.

\medskip

(b) \quad
See \cite{RotaSagan80}*{Cor.\ 4.3}.

\medskip

(c) \quad
Suppose that $qp^{\lfloor\log_pt\rfloor}|p^k$.
Since $p^{\lfloor\log_pt\rfloor}$ is the largest $p$-power $\leq t$, for every $1\leq l\leq t$ we have $q\gcd(p^k,l)|p^k$, so by (a), $q|\binom{p^k}l$.

Conversely, if $q$ divides $\binom{p^k}l$ for $l=p^{\lfloor\log_pt\rfloor}$, then by (b), $q$ divides $p^k/l$.

\medskip

(d) \quad
See e.g., \cite{Fine47}.
Note that the ``if" part is contained in (c).
\end{proof}

\begin{prop}[Sawin]
\label{exponent}
Let $s$ be a positive integer and let $q>1$ be a $p$-power.
Then the group $\dbU=\dbU_s(\dbZ/q)$ has exponent $qp^{\lfloor\log_ps\rfloor}$.
\end{prop}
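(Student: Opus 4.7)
The plan is to reduce the computation to binomial coefficient control via the obvious binomial expansion. Since $|\dbU|=q^{s(s+1)/2}$ is a $p$-power, every element of $\dbU$ has $p$-power order, so it suffices to find the smallest $p$-power that annihilates every $g\in\dbU$. Writing $g=I_{s+1}+N$ with $N$ strictly upper-triangular, one has $N^{s+1}=0$, and since $I_{s+1}$ commutes with $N$,
\[
g^{p^k}=\sum_{l=0}^{s}\binom{p^k}{l}N^l.
\]

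For the upper bound I would choose $p^k=qp^{\lfloor\log_ps\rfloor}$ (a $p$-power, since $q$ is), and invoke Proposition \ref{binomial coef}(c) with $t=s$: this yields $q\mid\binom{p^k}{l}$ for all $1\leq l\leq s$, hence $g^{p^k}=I_{s+1}$ for every $g\in\dbU$. Therefore the exponent of $\dbU$ divides $qp^{\lfloor\log_ps\rfloor}$.

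To see that this bound is sharp, I would test it on the standard Jordan block $g_0=I_{s+1}+N_0$, where $N_0=\sum_{i=1}^{s}E_{i,i+1}$. A direct computation gives $N_0^l=\sum_{i=1}^{s+1-l}E_{i,i+l}$, whose nonzero entries all equal $1\in\dbZ/q$. Consequently $g_0^{p^k}=I_{s+1}$ holds if and only if $q\mid\binom{p^k}{l}$ for every $l=1,\ldots,s$, and the converse direction of Proposition \ref{binomial coef}(c) then forces $qp^{\lfloor\log_ps\rfloor}\mid p^k$. Thus $g_0$ has order exactly $qp^{\lfloor\log_ps\rfloor}$, matching the upper bound. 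The main obstacle, such as it is, is simply having Proposition \ref{binomial coef}(c) on hand; once that combinatorial input is available, the argument is pure bookkeeping, and a single Jordan block already realizes the worst case.
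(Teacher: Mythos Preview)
Your proof is correct and follows essentially the same line as the paper's: binomial expansion of $(I_{s+1}+N)^{p^k}$, Proposition~\ref{binomial coef}(c) for the upper bound, and the Jordan block $I_{s+1}+\sum_i E_{i,i+1}$ to witness sharpness. The only cosmetic difference is that the paper explicitly tracks $t=\min(s,p^k)$ and checks $k=r-1$ directly, whereas you use both directions of Proposition~\ref{binomial coef}(c) at once to read off the exact order of $g_0$; these come to the same thing.
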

\begin{proof}
Let $p^r=qp^{\lfloor\log_ps\rfloor}$ and observe that $p^r>s$.

Consider an arbitrary matrix $I_{s+1}+N$ in  $\dbU$, where $N$ is upper-triangular with zeros on the main diagonal.
Thus $N^{s+1}=0$.
For every $k\geq0$ and for $t=\min(s,p^k)$, the binomial expansion formula implies that
\begin{equation}
\label{binom}
(I_{s+1}+N)^{p^k}=\sum_{l=0}^{p^k}\binom {p^k}lN^l=\sum_{l=0}^t\binom {p^k}lN^l.
\end{equation}

For $k=r$ we have $t=s$, and Proposition \ref{binomial coef}(c) implies that $(I_{s+1}+N)^{p^r}=I_{s+1}$.
Therefore the exponent of $\dbU$ divides  $p^r$.

On the other hand, take $k=r-1$.
As neither $qp^{\lfloor\log_ps\rfloor}$ nor $qp^k$ divides $p^k=p^{r-1}$, we have $qp^{\lfloor\log_pt\rfloor}\ndiv p^k$.
Proposition \ref{binomial coef}(c) therefore yields $1\leq l\leq t\,(\leq s)$ such  that $q\ndiv\binom{p^k}l$.
Take $N$ to be the $(s+1)\times(s+1)$-matrix which is $1$ on the super-diagonal and is $0$ elsewhere.
Then for $1\leq i\leq j\leq s+1$ we have $(N^m)_{ij}=1$, if $j-i=m$, and $(N^m)_{ij}=0$ otherwise.
It follows from (\ref{binom}) that $(I_{s+1}+N)^{p^{r-1}}\neq I_{s+1}$.
Consequently, the exponent of $\dbU$ is exactly $p^r$.
\end{proof}

\section{Pullbacks}
\label{section on pullbacks}
As before, let $n,s$ be integers such that $1\leq s\leq n$ and $n\geq2$, and set $\dbU=\dbU_s(R)$ where $R=\dbZ/p^{n-s+1}$.
Let $\pr_{ij}\colon\dbU\to R$ denote the projection on the $(i,j)$-entry, $i\leq j$.

\begin{defin}
\label{basic definitions}
\rm
Given a profinite group $G$ and a continuous homomorphism $\rho\colon G\to\dbU^{[n,p]}$, let $\rho^*\colon H^2(\dbU^{[n,p]})\to H^2(G)$ be the induced homomorphism on the cohomology.
We write $\rho^*\gam_{n,s}$ for the pullback of $\gam_{n,s}$ to $H^2(G)$ under $\rho$.
We define $H^2(G)_{n,s}$ to be the subset of $H^2(G)$ consisting of all such pulbacks.
\end{defin}

\begin{rems}
\label{properties of rho star}
\rm
(1) \quad
The pullback $\rho^*(\gam_{n,s})\in H^2(G)$ corresponds to the central extension \cite{Ledet05}*{p.\ 33}
\begin{equation}
\label{fiber product}
0\to\dbZ/p\to  \dbU\times_{\dbU^{[n,p]}}G\to G\to1,
\end{equation}
where the fiber product is with respect to the projection $\dbU\to\dbU^{[n,p]}$ and $\rho$.

\smallskip

\noindent
(2) \quad
In particular, $\rho^*(\gam_{n,s})=0$ if and only if (\ref{fiber product}) splits, which means that the embedding problem
\begin{equation}
\label{EP}
\xymatrix{
&&&G\ar@{-->}[dl]_{\widehat\rho}\ar[d]^{\rho}&\\
0\ar[r]&\dbZ/p\ar[r]&\dbU\ar[r]&\dbU^{[n,p]}\ar[r]&1
}
\end{equation}
is solvable, i.e., there  is continuous homomorphism $\hat\rho$ making it commutative.
Compare \cite{Hoechsmann68}*{1.1}.
\end{rems}

\begin{exam}
\label{example Bockstein}
\textsl{Bockstein elements.} \
\rm
The Bockstein map
\[
\Bock_{p^{n-1},G}\colon H^1(G,\dbZ/p^{n-1})\to H^2(G)
\]
is the connecting homomorphism arising from the short exact sequence of trivial $G$-modules
\[
0\to\dbZ/p\to\dbZ/p^n\to\dbZ/p^{n-1}\to0.
\]
Now take $s=1$, so $R=\dbZ/p^n$, $\dbU=\dbU_2(\dbZ/p^n)\isom\dbZ/p^n$ via the map $\pr_{1,2}$, and $\dbU^{[n,p]}\isom\dbZ/p^{n-1}$ (see Lemma \ref{filtration of U}(a)).
Hence  the continuous homomorphisms $\rho\colon G\to\dbU^{[n,p]}$ may be identified with the elements of $H^1(G,\dbZ/p^{n-1})$;
namely, $\rho$ corresponds  to $\pr_{1,2}\circ\rho\in H^1(G,\dbZ/p^{n-1})$.
By \cite{Efrat17}*{Example 7.4(1)}, one has
\[
\rho^*\gam_{n,1}=\Bock_{p^{n-1},G}(\pr_{1,2}\circ\rho).
\]
Thus
\[
H^2(G)_{n,1}=\Img(\Bock_{p^{n-1},G}).
\]
\end{exam}

\begin{exam}
\label{example Massey}
\textsl{Massey products.} \
\rm
Suppose that $n=s\geq2$, so $\dbU=\dbU_n(\dbZ/p)$.
The subgroup $H^2(G)_{n,n}$ turns out to be related to the $n$-fold Massey product $\langle\cdot\nek\cdot\rangle \colon H^1(G)^n\to H^2(G)$.
For a review on Massey products in the context of profinite cohomology we refer e.g., to \cite{Efrat14}, which follows \cite{Dwyer75}*{\S2} in the discrete setting.
We recall that for $\chi_1\nek\chi_n\in H^1(G)$, the Massey product $\langle\chi_1\nek\chi_n\rangle$ is a \textsl{subset} of $H^2(G)$, with the following properties:
\begin{enumerate}
\item[(i)]
The product  $\langle\chi_1,\nek,\chi_n\rangle$ consists exactly of the pullbacks $\rho^*\gam_{n,n}$, where $\rho\colon G\to \dbU^{[n,n]}$ is a continuous homomorphism such that $\chi_i=\pr_{i,i+1}\circ\rho$, $i=1,2\nek n$ \cite{Efrat17}*{Example 7.4(2)}.
\item[(ii)]
In particular,  one has $\langle\chi_1,\nek,\chi_n\rangle\neq\emptyset$ if and only if
there is a continuous homomorphism $\rho\colon G\to\dbU^{[n,p]}$ such that $\chi_i=\pr_{i,i+1}\circ\rho$, $i=1\nek n$.
\item[(iii)]
In view of (i) and Remark \ref{properties of rho star}(2), one has $0\in\langle\chi_1,\nek,\chi_n\rangle$ if and only if there is a continuous homomorphism $\hat\rho\colon G\to\dbU$ such that $\chi_i=\pr_{i,i+1}\circ
\hat\rho$, $i=1\nek n$.
\item[(iv)]
When $n=2$ and  $\rho\colon G\to\dbU^{[2,p]}$ is the unique homomorphism such that $\chi_1=\pr_{1,2}\circ\rho$ and $\chi_2=\pr_{2,3}\circ\rho$, one has $\rho^*\gam_{2,2}=\chi_1\cup\chi_2$.
\item[(v)]
When $n=3$, if $\langle\chi_1,\chi_2,\chi_3\rangle$ is nonempty, then it is a coset in $H^2(G)$ of its \textsl{indeterminicity}
\[
\chi_1\cup H^1(G)+\chi_3\cup H^1(G)=\bigl\{\chi_1\cup\psi_1+\chi_3\cup\psi_3\ \bigm|\ \psi_1,\psi_3\in H^1(G)\bigr\}.
\]
\end{enumerate}

From (i) we immediately deduce that $H^2(G)_{n,n}$ is the union of all $n$-fold Massey products in $H^2(G)$.

In particular, take $n=s=2$.
By (iv) above,
\[
H^2(G)_{2,2}=H^1(G)\cup H^1(G)=\bigl\{\chi_1\cup\chi_2\ \bigm|\ \chi_1,\chi_2\in H^1(G)\bigr\}.
\]
\end{exam}

\section{Examples}
\label{section on examples}
%
First we recall from \cite{FriedJarden08}*{\S17.4} the following terminology and facts on free profinite groups.
Let $G$ be a profinite group and $X$ a set.
A map $f\colon X\to G$ \textsl{converges to $1$}, if for every open normal subgroup $N$ of $G$, the set $X\setminus f\inv(N)$ is finite.

We say that a profinite group $S$ is a \textsl{free profinite group on basis $X$} with respect to a map $\iota\colon X\to S$ if
\begin{enumerate}
\item[(i)]
$\iota\colon X\to S$ converges to $1$ and $\iota(X)$ generates $S$ as a profinite group;
\item[(ii)]
For every profinite group $G$ and a map $f\colon X\to G$ converging to $1$, there is a unique continuous homomorphism $\hat f\colon S\to G$ such that $f=\hat f\circ\iota$ on $X$.
\end{enumerate}
A free profinite group $S=S_X$ on $X$ exists, and is unique up to a continuous isomorphism.
Moreover, $\iota$ is then injective, and one may identify $X$ with its image in $S$.

\begin{exam}
\rm
Let $S$ be a free profinite group, let $m>n$, and let $G=S^{[m,p]}$.
We show that then $\rho^*=0$ for every continuous homomorphism $\rho\colon G\to\dbU^{[n,p]}$, so in particular $\rho^*\gam_{n,s}=0$ for every $1\leq s\leq n$.
Since $H^2(S)=0$, the  five term exact sequence in profinite cohomology \cite{NeukirchSchmidtWingberg}*{Prop.\ 1.6.7} implies that the transgression map $\trg\colon H^1(S^{(n,p)})^S\to H^2(S^{[n,p]})$ is surjective.
The functoriality of $\trg$ gives rise to a commutative square
\[
\xymatrix{
H^1(S^{(n,p)})^S\ar[d]_{\res}\ar[r]^{\trg}&H^2(S^{[n,p]})\ar[d]^{\inf}\\
H^1(S^{(m,p)})^S\ar[r]^{\trg}& H^2(S^{[m,p]}).\\
}
\]
(see \cite{EfratMinac11}*{\S2B}).
By \cite{CheboluEfratMinac12}*{Lemma 5.4}, the map $\res\colon H^1(S^{(n,p)})^S\to H^1(S^{(n+1,p)})^S$ is trivial, and therefore so is the  left vertical map.
It follows that the right vertical inflation map is also trivial.

Now let $\rho\colon S^{[m,p]}\to\dbU^{[n,p]}$ be a continuous homomorphism.
The projectivity of $S$ yields a continuous homomorphism $\hat\rho\colon S\to\dbU$ such that the left square in the following diagram commutes:
\[
\xymatrix{
S\ar[r]\ar[d]_{\widehat\rho}& S^{[m,p]}\ar[d]^{\rho}\ar[r]& S^{[n,p]}\ar@{-->}[dl]^{\bar\rho}\\
\dbU\ar[r]&\dbU^{[n,p]}&\qquad.
}
\]
Now $\hat\rho(S^{(n,p)})\leq\dbU^{(n,p)}$, so $\hat\rho(S^{(n,p)})$ is mapped trivially into $\dbU^{[n,p]}$.
It follows that $\rho$  factors via a homomorphism $\bar\rho\colon S^{[n,p]}\to\dbU^{[n,p]}$, as in the diagram.
Then $\rho^*=\inf\circ\bar\rho^*$.
Since $\inf$ is trivial,  $\rho^*=0$.
\end{exam}

\begin{exam}
\label{cyclic example}
\rm
Let $G=\hat\dbZ^{[n,p]}=\dbZ/p^{n-1}$.
As this is a one-relator $p$-group,  one has $\dim_{\dbF_p}H^2(G)=1$ \cite{NeukirchSchmidtWingberg}*{Cor.\ 3.9.5}.
Take $2\leq s\leq n$.
When  $p=2$ we assume further that $3\leq s\leq n$.
An elementary calculation shows that then $n-s+1+\lfloor\log_ps\rfloor\leq n-1$.
Hence, by Proposition \ref{exponent}, the exponent of $\dbU=\dbU_s(\dbZ/p^{n-s+1})$ divides $p^{n-1}$.
Therefore the embedding problem (\ref{EP}) is solvable for every homomorphism $\rho\colon G\to \dbU^{[n,p]}$.
By Remark \ref{properties of rho star}(2), $\rho^*\gam_{n,s}=0$.

By contrast, when $s=1$ we have $\dbU\isom\dbZ/p^n$, $\dbU^{[n,p]}\isom\dbZ/p^{n-1}$.
Taking $\rho$ to be an isomorphism, we see that (\ref{EP}) is not solvable, so $\rho^*\gam_{n,1}\neq0$.

Finally,  suppose that $p=n=2$, so $G=\dbZ/2$.
Then $\gam_{2,2}$ corresponds to the short exact sequence
\[
0\to\dbZ/2\to D_4\to(\dbZ/2)^2\to0,
\]
where $D_4$ is the dihedral group of order $8$ and the kernel  is its unique normal subgroup of order $2$.
Let $\tau$ be an element of order $4$ in $D_4$, and $\bar\tau$ its image in $(\dbZ/2)^2$.
Define a homomorphism $\rho\colon \dbZ/2\to(\dbZ/2)^2$ by $\rho(\bar1)=\bar\tau$.
Then (\ref{EP}) is not solvable, whence $\rho^*\gam_{2,2}\neq0$.
\end{exam}

\section{The indecomposable quotient}
Let $R$ be a unital commutative ring, and let $A=\bigoplus_{s=0}^\infty A_s$ be a graded $R$-algebra.
Let $A_+=\bigoplus_{s=1}^\infty A_s$.
The proof of the following lemma is straightforward.

\begin{lem}
The following $R$-submodules of $A$ coincide:
\begin{enumerate}
\item[(i)]
The submodule of $A$ generated by all products $aa'$, where $a,a'\in A_+$;
\item[(ii)]
The submodule of $A$ generated by all products $aa'$, where $a,a'\in A_+$ are homogenous;
\item[(iii)]
The submodule of $A$ generated by all products $aa'$, where $a,a'$ are  elements of a given set $T$ of homogenous elements in $A_+$ which generates it as an $R$-module.
\end{enumerate}
\end{lem}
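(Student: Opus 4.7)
The plan is to verify the three inclusions (iii) $\subseteq$ (ii) $\subseteq$ (i) $\subseteq$ (iii), with the first two being essentially tautological and the last requiring only the decomposition of a general element of $A_+$ into homogeneous pieces plus the generating property of $T$.

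For (iii) $\subseteq$ (ii), this is immediate since elements of $T$ are homogeneous elements of $A_+$, so every product $tt'$ with $t,t'\in T$ is already one of the generators of (ii). For (ii) $\subseteq$ (i), it is even more trivially immediate: homogeneous elements of $A_+$ are in particular elements of $A_+$.

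For (i) $\subseteq$ (ii), I would take a generator $aa'$ of (i), with $a,a'\in A_+$, and decompose $a=\sum_i a_i$, $a'=\sum_j a'_j$, where each $a_i$ and $a'_j$ is the nonzero homogeneous component of $a$ and $a'$ in some strictly positive degree. Then all $a_i$ and $a'_j$ are in $A_+$, and $aa'=\sum_{i,j}a_ia'_j$ lies in (ii).

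Finally, for (ii) $\subseteq$ (iii), I would take homogeneous $a,a'\in A_+$ of degrees $d,d'\geq1$. Since $T$ generates $A_+$ as an $R$-module and consists of homogeneous elements, and since $A_d$ is a direct summand of $A_+$ with $R$-linear projection, the subset $T_d=T\cap A_d$ generates $A_d$ as an $R$-module; thus $a=\sum_i r_it_i$ with $t_i\in T_d\subseteq T$, and similarly $a'=\sum_j r'_jt'_j$ with $t'_j\in T_{d'}\subseteq T$. Then $aa'=\sum_{i,j}r_ir'_j\,t_it'_j$, which lies in (iii). The only mild subtlety, which I would make explicit, is this observation that a homogeneous generating set automatically generates each graded piece separately; beyond that, the argument is formal and presents no real obstacle.
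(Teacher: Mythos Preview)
Your proof is correct and is exactly the routine verification the paper has in mind; the paper itself omits the argument entirely, simply declaring the lemma ``straightforward.'' Your cycle of inclusions and the observation that a homogeneous generating set of $A_+$ generates each $A_d$ separately constitute precisely that straightforward check.
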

We denote this $R$-submodule by $D$, and call it the submodule of \textsl{weakly decomposable elements} of $A$.
Since $D$ is generated by homogenous elements, the quotient $A_\indec=A/D$ has the structure of a graded $R$-module, which we call the \textsl{indecomposable quotient} of $A$.
Note that $D_1=\{0\}$, so the graded module morphism $A\to A_{\indec}$ is an isomorphism in degree $1$.

In particular, let $X$ be a set, let $X^*$ be as before the free monoid on $X$, and let $\bigoplus_{w\in X^*}Rw$ be the free $R$-module on $X^*$, graded by the length map on $X^*$.
It can be completed to unital associative graded algebras in the following two ways:

\smallskip

(1) \quad {\sl The free associative $R$-algebra $R\langle X\rangle$ over $X$}:
Here the product is the concatenation in $X^*$ (simply denoted by  $(u,v)\mapsto uv$) extended by $R$-linearity, and the unit element is the unit element $1$ of $X^*$, i.e., the empty word.
The embedding $R\to R\langle X\rangle$ induces an isomorphism $R\isom R\langle X\rangle_\indec$ of graded $R$-modules (both concentrated in degree $0$).

\smallskip

(2) \quad
{\sl The  shuffle $R$-algebra $\Sh_R(X)$ on $X$}:
Here the product is the shuffle product $(u,v)\mapsto u\sha v$ on $X^*$ (see the Introduction) extended by $R$-linearity, and the unit element is again $1\in X^*$.

\smallskip

Whereas $R\langle X\rangle_\indec$ has a trivial structure,  the structure of $\Sh_R(X)_\indec$ is deep, as we shall see in the sequel.

The above two constructions are dual in a bialgebra context -- see \cite{LodayVallette12}*{\S1.3.2} for details.

\section{Words}
\label{section on words}
We fix a nonempty set $X$, considered as an alphabet, and fix a total order on $X$.
Then the free monoid $X^*$ on $X$ is totally ordered with respect to the alphabetical order $\leq$ induced by the total order on $X$.
As before, we consider its elements as words in the alphabet $X$.
Let $X^s$ denote the set of all words $w$ of length $|w|=s$.
Thus $X^0=\{1\}$.

A nonempty word $w\in X^*$ is called a \textsl{Lyndon word} if  it is smaller with respect to $\leq$  than all its non-trivial proper right factors (i.e., suffixes).
We denote the set of all Lyndon words of length $s$ on $X$ by  $\Lyn_s(X)$.

The number of Lyndon words of length $s$ over $X$ can be expressed in terms of  {\sl Witt's necklace function}, defined for integers $s,m\geq1$ by
\[
\varphi_s(m)=\frac1s\sum_{d|s}\mu(d)m^{s/d}.
\]
Here $\mu$ is the M\"obius function, defined by $\mu(d)=(-1)^k$, if $d$ is a product of $k$ distinct prime numbers, and $\mu(d)=0$ otherwise.
We also define $\varphi_s(\infty)=\infty$.
Then one has \cite{Reutenauer93}*{Cor.\ 4.14}
\begin{equation}
\label{LyndonWitt}
|\Lyn_s(X)|=\varphi_s(|X|).
\end{equation}

Every word $w\in X^*$ can be uniquely  written as a concatenation $w=u_1^{i_1}\cdots u_k^{i_k}$ of Lyndon words $u_1>\cdots>u_k$, where $i_1\nek i_k\geq1$, and $u^i$ denotes the concatenation of $i$ copies of $u$ \cite{Reutenauer93}*{Th.\ 5.1 and Cor.\ 4.7}.
Note that then $|w|=i_1|u_1|+\cdots+i_k|u_k|$.
We define a non-associative polynomial
\[
Q_w=\frac1{i_1!\cdots i_k!}u_1^{\sha i_1}\sha\cdots\sha u_k^{\sha i_k}.
\]
in $\dbQ\langle X\rangle$, where $u^{\sha i}$ is the shuffle product of $i$ copies of $u$.

\begin{rems}
\label{rems on Qw}
\rm
\begin{enumerate}
\item[(1)]
The polynomial $Q_w$ is homogenous of the same degree $i_1|u_1|+\cdots+i_k|u_k|$ as $w$.
\item[(2)]
The coefficients of $Q_w$ are in fact non-negative integers \cite{Reutenauer93}*{p.\ 128}.
\item[(3)]
When $w$ is a Lyndon word, $Q_w=w$.
\item[(4)]
If $w$ is not a Lyndon word, then $i_1!\cdots i_k!Q_w$ is weakly decomposable in $\Sh_\dbZ(X)$.
\end{enumerate}
\end{rems}

We will need the following result by Radford \cite{Radford79} and Perrin and Viennot (unpublished); see  \cite{Reutenauer93}*{Th.\ 6.1}.

\begin{thm}
\label{Radford thm}
For every word $w$ one has $Q_w=w+\sum_{v<w}a_{v,w}v$ for some non-negative integers $a_{v,w}$.
\end{thm}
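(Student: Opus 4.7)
By Remark~\ref{rems on Qw}(2), the coefficients of $Q_w$ are already known to be non-negative integers, so the claim reduces to showing that the coefficient of $w$ itself in $Q_w$ equals $1$ and that every other word appearing in $Q_w$ is alphabetically strictly less than $w$. Both assertions would follow from the lemma that, for Lyndon words $u_1 > u_2 > \cdots > u_k$ and positive integers $i_1,\ldots,i_k$, the alphabetically maximum word in the shuffle product $P = u_1^{\sha i_1} \sha \cdots \sha u_k^{\sha i_k}$ is the concatenation $w = u_1^{i_1} \cdots u_k^{i_k}$, appearing with coefficient exactly $i_1! \cdots i_k!$. Dividing by $i_1!\cdots i_k!$ then gives the theorem.

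\textbf{Counting the coefficient of $w$.} Each monomial in the expansion of $P$ corresponds to an interleaving of the $N = i_1+\cdots+i_k$ Lyndon-word copies (distinguishable, with each copy's internal letter order preserved). An interleaving produces exactly $w$ precisely when the copies are laid out in block form, with the $i_j$ copies of $u_j$ filling the $j$-th super-block of length $i_j|u_j|$; the uniqueness of the Chen--Fox--Lyndon factorization of $w$ as a concatenation of non-increasing Lyndon words forces this block structure. Within each super-block there are $i_j!$ orderings of the identical copies, so the coefficient of $w$ in $P$ is $\prod_j i_j! = i_1!\cdots i_k!$.

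\textbf{Lex-maximality and the main obstacle.} I would prove the lex-maximality of $w$ in $P$ by induction on $N$, using associativity and commutativity of $\sha$ to reduce to a two-factor statement: if $u$ is a Lyndon word and $t$ is a concatenation of Lyndon words in non-increasing order, each $\leq u$, then $ut$ is the lex-maximum of $u \sha t$. Given any interleaving of $u$ with $t$ producing a word $w' \neq ut$, I would locate the first position $j$ at which it deviates from the ``$u$-first'' interleaving. At that position $ut$ has the letter $u_j$, while $w'$ has the first unused letter of $t$, which is the leading letter of the initial Lyndon factor of $t$. Using the defining Lyndon property of $u$ --- that $u$ is strictly smaller than each of its nontrivial proper suffixes, so in particular its first letter is the minimum of all its letters --- together with $u \geq v$ for each Lyndon factor $v$ of $t$, one shows that $u_j$ dominates the incoming letter, yielding $w' < ut$; in the equality case, the comparison continues into the suffix of $u$. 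The main technical obstacle is precisely this tie case: when $u$ and an initial Lyndon factor of $t$ share a nontrivial common prefix, equality can persist for many positions, and one must carry the Lyndon recursion carefully through the interior of $u$ and across the boundary into the next Lyndon factor of $t$. Lyndon-ness is essential here, since shuffle products of general words need not have the decreasing concatenation as their lex-maximum.
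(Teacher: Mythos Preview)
The paper does not supply its own proof of this theorem; it is quoted with attribution to Radford and to Perrin--Viennot (unpublished), with a pointer to \cite{Reutenauer93}*{Th.\ 6.1}. Your sketch follows the combinatorial route taken in that reference, so strategically you are on the right track; but two steps, as written, do not go through.

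First, the coefficient count. Chen--Fox--Lyndon uniqueness is a statement about factorizations of $w$ as a \emph{concatenation} of non-increasing Lyndon words; a non-block interleaving of the $N$ copies is not a concatenation of those copies at all, so CFL says nothing about it. What actually forces block form is the \emph{strict} version of the lex-maximality lemma: every interleaving that is not a concatenation of the $N$ copies in some order yields a word strictly smaller than $w$. Once that is established, CFL uniqueness legitimately tells you which of the $N!$ concatenation orders reproduce $w$---precisely those permuting identical copies---and the count $i_1!\cdots i_k!$ follows. So the coefficient paragraph should come after the lex-maximality argument and invoke its strict conclusion, not CFL alone.

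Second, the inductive reduction to two factors via associativity is not clean. The inner shuffle $u_1^{\sha(i_1-1)}\sha u_2^{\sha i_2}\sha\cdots\sha u_k^{\sha i_k}$ is a polynomial, and the words $v$ occurring in it need not themselves be non-increasing Lyndon products with factors $\leq u_1$, so your two-factor lemma does not automatically apply to each $u_1\sha v$. The naive monotonicity one would want---``$v\leq t$ implies every word in $u\sha v$ is $\leq ut$''---is false for general $v$: with $u=(ab)$ and $v=(ba)$ one has $(baba)\in u\sha v$ but $(baba)>(abba)=uv$. The standard argument in \cite{Reutenauer93} avoids this by comparing an arbitrary $N$-fold interleaving directly to the block interleaving, position by position, using that a Lyndon word is strictly smaller than each of its proper suffixes; the tie case you flag is exactly where this property is used.
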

Note that in this sum we may restrict ourselves to words $v$ of the same length $s$ as $w$.

Assume for a moment that $X$ is finite, and let $[Q_w]_{w\in X^s}$ and $[w]_{w\in X^s}$ denote the column matrices with respect to the lexicographic order on $X^s$.
Theorem \ref{Radford thm} gives an upper-triangular unipotent matrix $M$ with entries in $\dbZ$ such that $[Q_w]_{w\in X^s}=M[w]_{w\in X^s}$.
Since $M$ is invertible over $\dbZ$, and  $X^s$ is a $\dbZ$-linear basis of the homogeneous component $\dbZ\langle X\rangle_s$ of  $\dbZ\langle X\rangle$ of degree $s$, the polynomials $(Q_w)_{w\in X^s}$ also form a $\dbZ$-linear basis of $\dbZ\langle X\rangle_s$.
Thus
\begin{equation}
\label{aternative basis}
\dbZ\langle X\rangle_s=\bigoplus_{u\in X^s}\dbZ u=\bigoplus_{w\in X^s}\dbZ Q_w,
\end{equation}
and the polynomials $(Q_w)_{w\in X^*}$  form a $\dbZ$-linear basis of $\dbZ\langle X\rangle$.
By a limit argument, the same holds also when $X$ is infinite.

For the following proposition, note that the structural graded module epimorphism $\Sh_\dbZ(X)\to\Sh_\dbZ(X)_\indec$ induces for every $s\geq0$  epimorphisms of $\dbZ$-modules
\[
\dbZ\langle X\rangle_s=\Sh_\dbZ(X)_s\to\Sh_\dbZ(X)_{\indec,s}\xrightarrow{-\tensor1}\Sh_\dbZ(X)_{\indec,s}\tensor(\dbZ/p)
\]
 on the $s$-homogenous components.

\begin{prop}
\label{Lyndon words generate}
Suppose that $1\leq s<p$.
Then  the images of the Lyndon words of length
 $s$ generate the $\dbZ/p$-module $\Sh_\dbZ(X)_{\indec,s}\tensor(\dbZ/p)$.
\end{prop}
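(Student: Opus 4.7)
The plan is to leverage the alternative $\dbZ$-basis $\{Q_w\}_{w\in X^s}$ of $\dbZ\langle X\rangle_s$ established in (\ref{aternative basis}), and show that modulo $p$ the images of the non-Lyndon $Q_w$ vanish in the indecomposable quotient, leaving only the Lyndon-word generators.

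First, I would recall the composite epimorphism
\[
\dbZ\langle X\rangle_s=\Sh_\dbZ(X)_s\twoheadrightarrow\Sh_\dbZ(X)_{\indec,s}\twoheadrightarrow\Sh_\dbZ(X)_{\indec,s}\tensor(\dbZ/p).
\]
Since $(Q_w)_{w\in X^s}$ is a $\dbZ$-basis of $\dbZ\langle X\rangle_s$ by (\ref{aternative basis}), its image generates the target. For a Lyndon word $w$ we have $Q_w=w$ by Remarks \ref{rems on Qw}(3), so it suffices to prove that the image of $Q_w$ vanishes whenever $w$ is not a Lyndon word.

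Now fix a non-Lyndon $w\in X^s$ with Lyndon factorization $w=u_1^{i_1}\cdots u_k^{i_k}$, $u_1>\cdots>u_k$. Non-Lyndonness forces either $k\geq2$ or $i_1\geq2$, so by definition
\[
i_1!\cdots i_k!\,Q_w=u_1^{\sha i_1}\sha\cdots\sha u_k^{\sha i_k}
\]
is a shuffle product of at least two nonempty words; by Remark \ref{rems on Qw}(4) this element lies in the weakly decomposable submodule $M$, hence maps to $0$ in $\Sh_\dbZ(X)_{\indec,s}$. The key observation is that from $s=\sum_{j=1}^k i_j|u_j|\geq\sum_{j=1}^k i_j$ and the hypothesis $s<p$, each $i_j$ is strictly less than $p$. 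Consequently $i_j!$ is a product of positive integers all $<p$, hence coprime to $p$, and so $i_1!\cdots i_k!$ is invertible in $\dbZ/p$. Multiplying through by its inverse, the image of $Q_w$ in $\Sh_\dbZ(X)_{\indec,s}\tensor(\dbZ/p)$ is zero, as required.

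There is no real obstacle here; the proposition is essentially a packaging of Radford's theorem once one notices that the clearing-of-denominators factor $i_1!\cdots i_k!$ is a unit in $\dbZ/p$ under the hypothesis $s<p$. The one point that deserves care is verifying that $\sum i_j\leq s$ (not just $\max i_j\leq s$), so that the whole factorial product—not merely each individual $i_j!$—stays coprime to $p$; this is immediate from $|u_j|\geq1$. It is also worth remarking that this is precisely where the bound $s<p$ enters, and relaxing it would allow some $i_j$ to reach $p$, breaking invertibility; this matches the counterexamples alluded to at $n=p$ in the introduction.
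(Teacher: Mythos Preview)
Your proof is correct and follows essentially the same approach as the paper: use the Radford basis $(Q_w)_{w\in X^s}$ of $\dbZ\langle X\rangle_s$, note that for non-Lyndon $w$ the factor $i_1!\cdots i_k!$ is invertible mod $p$ since each $i_j\leq s<p$, and conclude that only the Lyndon $Q_w=w$ survive in the indecomposable quotient tensored with $\dbZ/p$. One minor aside on your closing remark: knowing $\max_j i_j<p$ already makes each $i_j!$, and hence their product, coprime to $p$, so the stronger bound $\sum_j i_j<p$ is not actually needed for the argument.
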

\begin{proof}
Consider  $w\in X^s$ with its unique presentation $w=u_1^{i_1}\cdots u_k^{i_k}$ as a concatanation of Lyndon words as above.
Then $1\leq i_1\nek i_k\leq s<p$, so $i_1!\cdots i_k!$ is invertible modulo $p$.
If in addition $w\not\in\Lyn_s(X)$, then $i_1!\cdots i_k!Q_w$ is weakly indecomposable in $\Sh_\dbZ(X)$ (Remark \ref{rems on Qw}(4)).
Therefore in this case, $Q_w$ has a trivial image in $\Sh_\dbZ(X)_{\indec,s}\tensor(\dbZ/p)$.

Hence the $\dbZ$-modules  $\sum_{w\in\Lyn_s(X)}\dbZ Q_w$ and $\sum_{w\in X^s}\dbZ Q_w$ have the same image in $\Sh_\dbZ(X)_{\indec,s}\tensor(\dbZ/p)$.
By (\ref{aternative basis}),  this image is all of $\Sh_\dbZ(X)_{\indec,s}\tensor(\dbZ/p)$.
\end{proof}

\begin{exam}
\label{exam of Sh for a single letter}
\rm
In Proposition \ref{Lyndon words generate} one cannot omit the assumption that $s<p$.
Indeed, suppose that $X=\{x\}$ consists of a single letter.
Then for every $s$, the set $X^s$ consists solely of the concatenation power $(x)^s=(x\cdots x)$ ($s$ times).
Therefore $\Sh_\dbZ(X)_s=\dbZ\langle X\rangle_s\isom \dbZ$ for every $s$.

Let $D$ be the module of weakly decomposable elements in $\Sh_\dbZ(X)$.

In degree $1$ we have $D_1=\{0\}$, so $\Sh_\dbZ(X)_{\indec,1}=\Sh_\dbZ(X)_1\isom\dbZ$, whence $\Sh_\dbZ(X)_{\indec,1}\tensor(\dbZ/p)\isom\dbZ/p$.

In degree $s\geq2$ the $\dbZ$-module $D_s$ is generated by the shuffles $(x)^i\sha(x)^{s-i}=\binom si(x)^s$,  $i=1,2\nek s-1$.
Hence $D_s=\dbZ h(s)(x)^s$, where
\[
h(s)=\gcd\Bigl\{{\binom si}\ \Bigm|\ 1\leq i\leq s-1\Bigr\},
\]
implying that $\Sh_\dbZ(X)_{\indec,s}\isom \dbZ/h(s)$.
Now one has $p|h(s)$ if and only if $s$ is a $p$-power (Proposition \ref{binomial coef}(d)).
Therefore  $\Sh_\dbZ(X)_{\indec,s}\tensor(\dbZ/p)$ is $\dbZ/p$ if $s$ is a $p$-power, and is $0$ otherwise.

Moreover, we have $\Lyn_1(X)=\{(x)\}$ and $\Lyn_s(X)=\emptyset$ for $s\geq2$.
Therefore when $s>1$ is a $p$-power, the images of the Lyndon words of length $s$ do not generate $\Sh_\dbZ(X)_{\indec,s}\tensor(\dbZ/p)$.
In particular, the image of $\Lyn_p(X)$ does not generate $\Sh_\dbZ(X)_{\indec,p}\tensor(\dbZ/p)$.
\end{exam}

\section{Shuffle relations}
\label{section on shuffle relations}
Given a commutative unital ring $R$, let $R\langle\langle X\rangle\rangle$ be the ring of formal power series in the set $X$ of non-commuting variables and with coefficients from $R$.
Let $R\langle\langle X\rangle\rangle^\times$ be its group of invertible elements.
Note that for $x\in X$ one has $(1+x)\sum_{i=0}^\infty(-1)^ix^i=1$, so $1+x\in R\langle\langle X\rangle\rangle^\times$.

Assuming further that $R$ is finite, $R\langle\langle X\rangle\rangle^\times$  is a profinite group in a natural way \cite{Efrat14}*{\S5}.
Let $S$ be the free profinite group on the basis $X$ (see \S\ref{section on examples}).
There is a unique continuous homomorphism, $\Lam_R\colon S\to R\langle\langle X\rangle\rangle^\times$, called the \textsl{profinite Magnus homomorphism},  satisfying $\Lam_R(x)=1+x$ for every $x\in X$; see \cite{Efrat14}*{\S5} for its detailed construction.
This construction is functorial, in the sense that for every ring homomorphism $\varphi\colon R\to R'$ with an induced group homomorphism $\tilde\varphi\colon R\langle\langle X\rangle\rangle^\times\to R'\langle\langle X\rangle\rangle^\times$ one has $\tilde\varphi\circ\Lam_R=\Lam_{R'}$.

As in \cite{Efrat17}*{Example 7.4}, for every integers $n,s$ such that  $1\leq s\leq n$ and $n\geq2$, and every word $w=(x_1\cdots x_s)\in X^s$,  we associate a cohomology element $\alp_{w,n}\in H^2(S^{[n,p]})$ as follows:

Let again  $\dbU=\dbU_s(R)$ where $R=\dbZ/p^{n-s+1}$.
First we define a map $\rho_R^w\colon S\to \dbU_s(R)$ by setting the entry $(\rho_R^w(\sig))_{ij}$, where $i<j$, to be the coefficient of the subword $(x_i\cdots x_{j-1})$ in the power series $\Lam_R(\sig)$.
The map $\rho^w_R$ is a continuous group homomorphism \cite{Efrat14}*{Lemma 7.5}.
It induces a homomorphism $\bar\rho^w_R\colon S^{[n,p]}\to\dbU^{[n,p]}$.
Let $(\bar\rho^w_R)^*\colon H^2(\dbU^{[n,p]})\to H^2(S^{[n,p]})$ be the induced homomorphism on the cohomology groups.
For $\gam_{n,s}$ as in \S\ref{section on unitriangular matrices}, we obtain the pullback
\[
\alp_{w,n}=(\bar\rho_R^w)^*(\gam_{n,s})\in H^2(S^{[n,p]}).
\]

Fixing a total order on $X$,  the main theorem of \cite{Efrat17}  states:

\begin{thm}
\label{Efr17 Main Theorem}
For $n\geq2$, the cohomology elements $\alp_{w,n}$, where $w$ ranges over all Lyndon words of length $1\leq|w|\leq n$ in the alphabet $X$, form an $\dbF_p$-linear basis of $H^2(S^{[n,p]})$.
\end{thm}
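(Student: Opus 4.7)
The plan is to identify $H^2(S^{[n,p]})$ with the $\dbF_p$-dual of $S^{(n,p)}/S^{(n+1,p)}$ via transgression, and then to exhibit the $\alp_{w,n}$ with Lyndon $w$ as the dual basis of a canonical Lyndon basis on the other side. First, apply the five-term exact sequence in profinite cohomology to $1\to S^{(n,p)}\to S\to S^{[n,p]}\to 1$. Since $S$ is free, $H^2(S,\dbF_p)=0$; since $H^1(S)$ and $H^1(S^{[n,p]})$ both compute $\Hom(S/S^{(2,p)},\dbF_p)$, inflation is an isomorphism and the restriction to $H^1(S^{(n,p)})^S$ vanishes. This yields a transgression isomorphism
\[
\trg\colon \Hom_{\dbF_p}\bigl(S^{(n,p)}/S^{(n+1,p)},\dbF_p\bigr)\xrightarrow{\sim} H^2(S^{[n,p]}),
\]
using that $S$-invariant $\dbF_p$-valued homomorphisms on $S^{(n,p)}$ factor through $S^{(n,p)}/[S,S^{(n,p)}](S^{(n,p)})^p=S^{(n,p)}/S^{(n+1,p)}$.

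Next, invoke the Magnus--Lazard--Shirshov description of the lower $p$-central graded of a free profinite group: $S^{(n,p)}/S^{(n+1,p)}$ is free over $\dbF_p$ with a basis $\{\beta_{w,n}\}$ indexed by Lyndon words of length $1\leq|w|\leq n$, where for $w$ of length $s$ the element $\beta_{w,n}$ is the class of $\beta_w^{p^{n-s}}$, with $\beta_w\in S^{(s,p)}$ the iterated Lyndon Lie bracket on $w$. In particular the dimension matches $\sum_{s=1}^n\varphi_s(|X|)$. On the cohomological side, unfolding the Schreier cocycle of the central extension of Lemma \ref{filtration of U}(a) pulled back along $\bar\rho_R^w$, and using that $\rho_R^w$ maps $S^{(n,p)}$ into $\dbU^{(n,p)}\isom\dbF_p$ via the $(1,s{+}1)$-entry scaled by $p^{n-s}$, yields
\[
\trg^{-1}(\alp_{w,n})(\sigma)\equiv(\text{coefficient of }w\text{ in }\Lam_R(\sigma))\pmod{p},\qquad \sigma\in S^{(n,p)}.
\]

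It remains to verify the duality $\trg^{-1}(\alp_{w,n})(\beta_{v,n})=\delta_{v,w}$ for Lyndon $v,w$ of lengths $t,s\leq n$; this is the crux. The task reduces to computing the $w$-coefficient in the Magnus expansion $\Lam_R(\beta_v^{p^{n-t}})$. Shirshov's PBW-type theorem for free Lie algebras controls the pure Lie part (the $u$-coefficient of $\Lam_R(\beta_v)$ vanishes for Lyndon $u$ of length $t$ unless $u=v$, in which case it is a unit), while the $p$-th-power aspect is controlled by the binomial congruences of Proposition \ref{binomial coef} via Lemma \ref{filtration of U}. The main obstacle is tracking cross-terms between iterated $p$-th powers and Lie bracketing when $s<n$: these vanish mod $p$ because $\rho_R^w$ factors through $\dbU^{[n,p]}$ whose $p$-central filtration is exhausted by Lemma \ref{filtration of U}(c), and mixed terms of the wrong graded weight lie in $S^{(n+1,p)}$ where $\trg^{-1}(\alp_{w,n})$ vanishes. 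Once the pairing matrix is shown to be triangular with unit diagonal in the Lyndon ordering, linear independence follows, and the dimension count from the graded description closes the argument.
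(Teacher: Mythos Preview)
This theorem is not proved in the present paper at all: it is quoted verbatim as ``the main theorem of \cite{Efrat17}'' and used as a black box in the proof of Theorem~\ref{main results}. So there is no in-paper proof to compare against; your proposal is really a sketch of how one might reprove the cited result.

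As a strategy your outline is the natural one and is essentially the one underlying \cite{Efrat17}: the five-term sequence gives the transgression isomorphism $H^1(S^{(n,p)})^S\xrightarrow{\sim}H^2(S^{[n,p]})$ exactly as you say, and by the functoriality of $\trg$ together with Lemma~\ref{filtration of U}(a) one obtains that $\trg^{-1}(\alp_{w,n})$ is, up to the $p^{n-s}$ scaling you mention, the map $\sig\mapsto(\text{coefficient of }w\text{ in }\Lam_R(\sig))$ on $S^{(n,p)}$. Your same-length diagonal claim is also correct: for Lyndon $u,v$ with $|u|=|v|$ the coefficient of $v$ in the Lie polynomial $P_u$ is $\delta_{u,v}$ (this is the Radford/Reutenauer duality $(P_u,Q_v)=\delta_{u,v}$ combined with $Q_v=v$ for Lyndon $v$), and the $p^{n-t}$-th power contributes the correct factor $p^{n-s}$ in degree $s=t$.

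Where your sketch becomes genuinely incomplete is the cross-length case $s>t$. You assert that ``mixed terms of the wrong graded weight lie in $S^{(n+1,p)}$'', but this is not the issue: you are evaluating $\trg^{-1}(\alp_{w,n})$ on the \emph{fixed} representative $\beta_v^{p^{n-t}}\in S^{(n,p)}$, and what must be shown is that the degree-$s$ coefficient of $w$ in $\Lam_R(\beta_v^{p^{n-t}})$ lies in $p^{n-s+1}R$ (i.e., vanishes in $R=\dbZ/p^{n-s+1}$). This involves controlling products like $\binom{p^{n-t}}{j}P_v^{j}$ and the higher-degree tail of $\Lam_R(\beta_v)$, and it does not follow from Lemma~\ref{filtration of U} alone; nor does your appeal to ``triangular with unit diagonal'' help, since you have not specified an ordering across different lengths or argued why the off-block entries vanish. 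You also oscillate between claiming the pairing is diagonal (for equal lengths) and merely triangular, without saying which you actually need. This cross-length bookkeeping is precisely the technical heart of \cite{Efrat17}, and your proposal would have to carry it out in full to stand as a proof.
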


For $f\in \dbZ\langle X\rangle$ and $w\in X^*$, let $f_w$ be the coefficient of $w$ in $f$.
By \cite{Efrat17}*{Th.\ 9.4}, one has the following \textsl{shuffle relations}:

\begin{thm}
\label{shuffle relations}
Let $u,v\in X^*$ be nonempty words with $s=|u|+|v|\leq n$.
Then
\[
\sum_{w\in X^s}(u\sha v)_w\alp_{w,n}=0.
\]
\end{thm}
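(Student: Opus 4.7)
The plan is to compute an explicit $\dbZ/p$-valued cocycle representative for each $\alp_{w,n}$ via the Magnus expansion, form the shuffle combination of these cocycles, and exhibit it as the coboundary of a $1$-cochain that factors through $S^{[n,p]}$. Two identities for the Magnus coefficients $\eps_v(\sigma):=\Lam_R(\sigma)_v\in R$ drive everything: since $\Lam_R$ is a homomorphism one has the \emph{deconcatenation identity} $\eps_w(\sigma\tau)=\sum_{w=w_1w_2}\eps_{w_1}(\sigma)\eps_{w_2}(\tau)$, and since each $1+x$ is group-like in the completed shuffle Hopf algebra one has the \emph{shuffle identity} $\eps_a(\sigma)\eps_b(\sigma)=\sum_w(a\sha b)_w\eps_w(\sigma)$.

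To extract an explicit cocycle, I would take the set-theoretic section $t\colon\dbU^{[n,p]}\to\dbU$ which is the identity on entries $(i,j)\neq(1,s+1)$ and sends the $(1,s+1)$-entry to its canonical representative in $\{0,\ldots,p^{n-s}-1\}$. Comparing the $(1,s+1)$-entry of $t(\bar\rho_R^w(\sigma))\cdot t(\bar\rho_R^w(\tau))$ with that of $t(\bar\rho_R^w(\sigma\tau))$ using the deconcatenation identity yields
\[
\alp_{w,n}(\sigma,\tau)=\phi_w(\sigma)+\phi_w(\tau)-\phi_w(\sigma\tau),\quad \phi_w(\sigma):=\{\eps_w(\sigma)\bmod p^{n-s}\}-\eps_w(\sigma)\in p^{n-s}R\isom\dbZ/p.
\]
Linearity and the shuffle identity then give
\[
\sum_{w\in X^s}(u\sha v)_w\alp_{w,n}=\delta\Psi,\quad \Psi(\sigma)=\sum_w(u\sha v)_w e_w(\sigma)-\eps_u(\sigma)\eps_v(\sigma),
\]
where $e_w(\sigma)=\{\eps_w(\sigma)\bmod p^{n-s}\}$.

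Finally, I would show that $\Psi$ descends to a $1$-cochain on $S^{[n,p]}$. Via the Magnus--Zassenhaus correspondence between $S^{(n,p)}$ and the $J$-adic filtration on $R\langle\langle X\rangle\rangle$ (with $J=(p,X)$), one has $\eps_u(\sigma)\in p^{n-|u|}R$ whenever $\sigma\in S^{(n,p)}$ and $|u|\leq n$. For $\tau'\in S^{(n,p)}$ and arbitrary $\sigma\in S$, the deconcatenation identity writes $\eps_u(\sigma\tau')-\eps_u(\sigma)$ as a sum of terms each in $p^{n-|u|}R$, and likewise for $v$. Since $|u|,|v|\leq s-1$, cross-terms such as $\bigl(\eps_u(\sigma\tau')-\eps_u(\sigma)\bigr)\cdot\eps_v(\sigma)$ land in $p^{n-|u|}R\subseteq p^{n-s+1}R=0$ in $R$; hence $\eps_u(\sigma\tau')\eps_v(\sigma\tau')=\eps_u(\sigma)\eps_v(\sigma)$ in $R$. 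An analogous computation gives $e_w(\sigma\tau')=e_w(\sigma)$. Therefore $\Psi(\sigma\tau')=\Psi(\sigma)$, so $\Psi$ factors through $S^{[n,p]}$, and its coboundary on $S^{[n,p]}$ realises the shuffle combination as zero in $H^2(S^{[n,p]})$.

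The main obstacle I anticipate is justifying the Magnus--Zassenhaus divisibility $\eps_u(\sigma)\in p^{n-|u|}R$ for the integral Magnus expansion (as opposed to the $\dbF_p$-version, which is classical), which requires a careful identification of the lower $p$-central filtration on $S$ with the $J$-adic filtration on $R\langle\langle X\rangle\rangle$ under $\Lam_R$. Once this structural input is in hand, the rest is routine manipulation of the shuffle and deconcatenation identities together with the multiplication formula in $\dbU_s(R)$.
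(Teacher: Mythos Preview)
Your argument contains a genuine gap: the claimed ``shuffle identity'' $\eps_u(\sigma)\eps_v(\sigma)=\sum_w(u\sha v)_w\eps_w(\sigma)$ for the Magnus coefficients is false in general. That identity is equivalent to $\Lam_R(\sigma)$ being group-like for the Hopf coproduct on $R\langle\langle X\rangle\rangle$ determined by declaring each $x\in X$ primitive. But $\Lam_R(x)=1+x$ is \emph{not} group-like there, since $\Delta(1+x)=1\otimes1+x\otimes1+1\otimes x$ differs from $(1+x)\otimes(1+x)$ by $x\otimes x$. Concretely, for $u=v=(x)$ and $\sigma=x\in S$ one has $\eps_{(x)}(\sigma)^2=1$ while $\sum_w((x)\sha(x))_w\eps_w(\sigma)=2\eps_{(xx)}(\sigma)=0$. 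The deconcatenation identity you use is correct (it simply encodes the multiplicativity of $\Lam_R$), but without the shuffle identity your displayed equality $\sum_w(u\sha v)_w\alp_{w,n}=\delta\Psi$ does not follow, and the remainder of the argument collapses.

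For the record, the present paper does not prove this theorem; it is quoted from \cite{Efrat17}*{Th.~9.4}. A route in the spirit of your proposal is to pass through the transgression isomorphism $H^1(S^{(n,p)})^S\xrightarrow{\sim}H^2(S^{[n,p]})$ (available since $H^2(S)=0$) and to identify $\alp_{w,n}$ with the restriction to $S^{(n,p)}$ of a suitable coefficient functional. On $S^{(n,p)}$ the relevant \emph{leading} part of $\Lam_R(\sigma)-1$ is a (restricted) Lie element, and Ree's criterion---a series is a Lie element precisely when its coefficients are annihilated by all nontrivial shuffles---yields $\sum_w(u\sha v)_w\eps_w(\sigma)=0$ at that level. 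In short, the shuffle step has to be carried out on the associated graded object, not on all of $S$.
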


We now prove our main result, and at the same time strengthen Proposition \ref{Lyndon words generate}:

\begin{thm}
\label{main results}
Let $p$ be a prime number and $n$ an integer such that $2\leq n<p$.
Then:
\begin{enumerate}
\item[(a)]
The map $w\mapsto\alp_{w,n}$ induces a $\dbZ/p$-module isomorphism
\[
\Bigl(\bigoplus_{s=1}^n\Sh_\dbZ(X)_{\indec,s}\Bigr)\tensor(\dbZ/p)\xrightarrow{\sim} H^2(S^{[n,p]}).
\]
\item[(b)]
For every $1\leq s<p$,  the images of the Lyndon words of length $s$ form a linear basis of the $\dbZ/p$-module $\Sh_\dbZ(X)_{\indec,s}\tensor(\dbZ/p)$.
\end{enumerate}
\end{thm}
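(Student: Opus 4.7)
The plan is to assemble the candidate map $\bar\Phi$ from the source of the theorem to $H^2(S^{[n,p]})$, exhibit a natural spanning set of the source, and show that its image under $\bar\Phi$ is precisely the Lyndon-word basis of the target provided by Theorem~\ref{Efr17 Main Theorem}; this will force the spanning set to be a basis and $\bar\Phi$ to be an isomorphism, yielding~(a), with (b) falling out from the graded structure. To build $\bar\Phi$, I would first extend the assignment $w\mapsto\alp_{w,n}$ $\dbZ$-linearly to $\bigoplus_{s=1}^n\dbZ\langle X\rangle_s$. The shuffle relations (Theorem~\ref{shuffle relations}) ensure that $u\sha v\mapsto 0$ for every nonempty $u,v$ with $|u|+|v|\le n$, so the map annihilates the weakly decomposable submodule degree by degree up to $n$; since the target is $p$-torsion, I then reduce modulo $p$ to obtain $\bar\Phi\colon\bigoplus_{s=1}^n\Sh_\dbZ(X)_{\indec,s}\tensor(\dbZ/p)\to H^2(S^{[n,p]})$.

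For the isomorphism, I would apply Proposition~\ref{Lyndon words generate} in each degree $1\le s\le n$ (admissible since $s\le n<p$) to conclude that the images of the Lyndon words of length $s$ generate $V_s:=\Sh_\dbZ(X)_{\indec,s}\tensor(\dbZ/p)$. Hence the set $\calL=\bigsqcup_{s=1}^n\{[L]\mid L\in\Lyn_s(X)\}$ spans the domain of $\bar\Phi$, and under $\bar\Phi$ it is carried to $\{\alp_{L,n}:L\in\Lyn_s(X),\,1\le s\le n\}$, which by Theorem~\ref{Efr17 Main Theorem} is an $\dbF_p$-basis of $H^2(S^{[n,p]})$. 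A spanning set whose image under a linear map is linearly independent is itself linearly independent, so $\calL$ is a $\dbZ/p$-basis of the source and $\bar\Phi$ carries a basis to a basis, giving~(a). Since the elements of $\calL$ are graded (with $[L]\in V_{|L|}$) and the source is a direct sum, the basis statement~(b) is extracted from~(a) for $s\le n$; general $1\le s<p$ is handled by rerunning the argument with $n$ replaced by $\max(s,2)$ (the case $s=1$ being trivial, as there are no decomposables in degree~$1$).

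The heart of the argument is Proposition~\ref{Lyndon words generate}, and this is exactly where the hypothesis $n<p$ is consumed: Radford's triangular identity $Q_w=w+\sum_{v<w}a_{v,w}v$ is used together with the invertibility of the factorials $i_1!\cdots i_k!$ modulo $p$ to reduce an arbitrary word, modulo decomposables, to a $\dbZ/p$-combination of Lyndon words. This is the step I would expect to require the most care; everything else is formal bookkeeping that follows the basis-to-basis strategy above. Example~\ref{exam of Sh for a single letter} shows that the Lyndon-generation step — and thus the whole Main Theorem — genuinely fails once $s$ is a $p$-power with $s\ge p$, so the bound $n<p$ cannot be relaxed by the present approach.
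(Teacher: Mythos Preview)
Your proposal is correct and follows essentially the same approach as the paper: build $\bar\Phi$ from the shuffle relations, use Proposition~\ref{Lyndon words generate} to get Lyndon words as a spanning set of the source, and invoke Theorem~\ref{Efr17 Main Theorem} to see that their images are a basis of the target, forcing $\bar\Phi$ to be an isomorphism. Your remark that (b) for general $1\leq s<p$ follows by rerunning the argument with $n=\max(s,2)$ is a slight elaboration on the paper, which tacitly uses that (b) is independent of $n$ and that $n$ may be taken up to $p-1$.
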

\begin{proof}
For $1\leq s\leq n$, the map $X^s\to H^2(S^{[n,p]})$, $w\mapsto \alp_{w,n}$, extends by linearity to a $\dbZ$-module homomorphism
\[
\Phi_s\colon \dbZ\langle X\rangle_s=\bigoplus_{w\in X^s}\dbZ w\to H^2(S^{[n,p]}).
\]
Thus for $f\in \dbZ\langle X\rangle_s$ one has
\[
\Phi_s(f)=\Phi_s\bigl(\sum_{w\in X^s}f_ww\bigr)=\sum_{w\in X^s}f_w\alp_{w,n}.
\]
By Theorem \ref{shuffle relations}, $\Phi_s(u\sha v)=0$ for any nonempty words $u,v\in X^*$ with $s=|u|+|v|$.
Consequently, $\Phi_s$ factors via $\Sh_\dbZ(X)_{\indec,s}$.
Therefore it induces a $\dbF_p$-linear map
\begin{equation}
\label{epimorphism}
\bar\Phi_s\colon \Sh_\dbZ(X)_{\indec,s}\tensor(\dbZ/p)\to H^2(S^{[n,p]}).
\end{equation}

By Proposition \ref{Lyndon words generate}, for every $s$ the image of $\Lyn_s(X)$  spans $\Sh_\dbZ(X)_{\indec,s}\tensor(\dbZ/p)$.
Furthermore, by Theorem \ref{Efr17 Main Theorem}, the cohomology elements $\bar\Phi_s(\bar w)$, where $1\leq s\leq n$ and $\bar w\in L_s$, form an $\dbF_p$-linear basis of $H^2(S^{[n,p]})$.
It follows that $\bigoplus_{s=1}^n\bar\Phi_s\colon(\bigoplus_{s=1}^n\Sh_\dbZ(X)_{\indec,s}\tensor(\dbZ/p)\to H^2(S^{[n,p]})$ is an isomorphism, and the image of $\Lyn_s(X)$ in facts form a basis of $\Sh_\dbZ(X)_{\indec,s}\tensor(\dbZ/p)$ for every $1\leq s\leq n$.
\end{proof}

From Theorem \ref{main results}(b) and (\ref{LyndonWitt}) we deduce:

\begin{cor}
For $1\leq s<p$ one has
\[
\dim_{\dbF_p}(\Sh_\dbZ(X)_{\indec,s}\tensor(\dbZ/p))=\varphi_s(|X|).
\]
\end{cor}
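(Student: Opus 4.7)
The corollary is a direct combinatorial consequence of the two inputs already assembled just above it, so the plan is essentially to count a basis. The strategy is to identify a distinguished $\dbF_p$-linear basis of $\Sh_\dbZ(X)_{\indec,s}\tensor(\dbZ/p)$ and then invoke the known cardinality of that basis.

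First I would apply Theorem \ref{main results}(b), which asserts that, for every $1\leq s<p$, the images of the Lyndon words of length $s$ form an $\dbF_p$-linear basis of the $\dbZ/p$-module $\Sh_\dbZ(X)_{\indec,s}\tensor(\dbZ/p)$. In particular,
\[
\dim_{\dbF_p}\bigl(\Sh_\dbZ(X)_{\indec,s}\tensor(\dbZ/p)\bigr)=|\Lyn_s(X)|,
\]
with the convention that both sides are $\infty$ if $X$ is infinite.

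Next I would substitute the classical count of Lyndon words. By (\ref{LyndonWitt}), which records the identity $|\Lyn_s(X)|=\varphi_s(|X|)$ in terms of Witt's necklace function, the right-hand side above equals $\varphi_s(|X|)$. Combining the two identities yields the claimed formula.

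There is no real obstacle here: the work has already been done in establishing Theorem \ref{main results}(b) (where the injectivity of the shuffle-indecomposable-to-$H^2$ map was the delicate point, relying on the Radford polynomial basis and the condition $s<p$) and in recording the Witt formula (\ref{LyndonWitt}). The only thing to check is that the case $|X|=\infty$ is handled consistently, which is immediate from the convention $\varphi_s(\infty)=\infty$ adopted in \S\ref{section on words}.
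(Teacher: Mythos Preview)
Your argument is correct and coincides with the paper's own justification: the corollary is recorded immediately after Theorem~\ref{main results} with the single line ``From Theorem~\ref{main results}(b) and (\ref{LyndonWitt}) we deduce,'' which is exactly the two-step count you give.
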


The next example shows that in part (a) of Theorem \ref{main results} one cannot omit the assumption that $n<p$
 (for part (b) this has already been shown in Example \ref{exam of Sh for a single letter}).

\begin{exam}
\label{counterexamples}
\rm
Suppose that $X=\{x\}$.
Then $S^{[n,p]}=\hat\dbZ^{[n,p]}\isom\dbZ/p^{n-1}$, whence $\dim_{\dbF_p}H^2(S^{[n,p]})=1$.
On the other hand, Example \ref{exam of Sh for a single letter} shows that
\[
\dim_{\dbF_p}\Sh_\dbZ(X)_{\indec,1}\tensor(\dbZ/p)=\dim_{\dbF_p}\Sh_\dbZ(X)_{\indec,p}\tensor(\dbZ/p)=1,
\]
so there  is no isomorphism as in Theorem \ref{main results}(a).

Explicitly, Example \ref{cyclic example} shows that when $p\neq2$ and $n\geq p$, one has $\alp_{w,n}=(\bar\rho^w_{\dbZ/p^{n-p+1}})^*\gam_{n,p}=0$ for words $w$ of length $p$.
Thus $\bar\Phi_p$ is trivial on the nonzero summand $\Sh_\dbZ(X)_{\indec,p}\tensor(\dbZ/p)$.

Now suppose that $p=n=2$.
We have $\Bock_{2,G}(\chi)=\chi\cup\chi$ for every profinite group $G$ and every $\chi\in H^1(G)$ \cite{EfratMinac11}*{Lemma 2.4}.
Also, using the functoriality of the Magnus homomorphism we get $\pr_{1,2}\circ\bar\rho_{\dbZ/4}^{(x)}=\pr_{1,2}\circ\rho_{\dbZ/2}^{(xx)}=\pr_{3,4}\circ\rho_{\dbZ/2}^{(xx)}$.
Combining this with Examples \ref{example Bockstein} and \ref{example Massey} we obtain that
\[
\begin{split}
\bar\Phi_1(\overline{(x)})&=(\rho_{\dbZ/4}^{(x)})^*\gam_{2,1}
=\Bock_{2,\dbZ/2}(\pr_{1,2}\circ\bar\rho_{\dbZ/4}^{(x)})\\
&=(\pr_{1,2}\circ\bar\rho_{\dbZ/4}^{(x)})\cup(\pr_{1,2}\circ\bar\rho_{\dbZ/4}^{(x)})
=(\pr_{1,2}\circ\rho_{\dbZ/2}^{(xx)})\cup(\pr_{3,4}\circ\rho_{\dbZ/2}^{(xx)})\\
&=(\rho_{\dbZ/2}^{(xx)})^*\gam_{2,2}=\bar\Phi_2(\overline{(xx)}).
\end{split}
\]
This shows that the map $\bigoplus_{s=1}^n\bar\Phi_s$ is not injective in this case as well.
\end{exam}

\section{Absolute Galois groups}
\label{section on Galois groups}
Let $F$ be a field containing a root of unity of order $p$, and let $G=G_F$ be the absolute Galois group of $F$.
The Merkurjev--Suslin theorem implies that every element of $H^2(G)$ decomposes as a sum $\sum_i\chi_i\cup\chi'_i$ of cup products, $\chi_i,\chi'_i\in H^1(G)$.
In particular, for every continuous homomorphism $\rho\colon G\to\dbU(\dbZ/p^{n-s+1})^{[n,p]}$ the pullback $\rho^*\gam_{n,s}$ has such a decomposition.
While the Merkurjev--Suslin theorem is in general non-explicit, in a few cases the decomposition pattern of the  $\rho^*\gam_{n,s}$ is known, and in fact is related to some rather deep facts in Galois cohomology and to structural results on absolute Galois groups.

\medskip
\begin{exam}
\rm
Let $n=2$,  $s=1$.
Here $\dbU=\dbU_1(\dbZ/p^2)\isom\dbZ/p^2$ and $\dbU^{[2,p]}\isom\dbZ/p$ via $\pr_{1,2}$.
Denote the group of $p$th roots of unity by $\mu_p$.
Fix an isomorphism $i\colon\mu_p\xrightarrow{\sim}\dbZ/p$, let $i_*\colon H^1(G,\mu_p)\to H^1(G)=H^1(G,\dbZ/p)$ be the induced isomorphism,  and let $\zeta=i\inv(\bar1)$.
The {\sl Kummer map} $\kappa\colon F^\times=H^0(G,F_{\rm sep}^\times)\to H^1(G,\mu_p)$ is the connecting homomorphism arising from the short exact sequence
\[
1\to\mu_p\to F_\sep^\times\xrightarrow{p} F_\sep^\times\to1
\]
of discrete $G$-modules.
By \cite{EfratMinac11}*{Prop.\ 2.6},
\[
\Bock_{p,G}\cup\id=\id\cup\kappa
\]
as maps $H^1(G)\tensor H^0(G,\mu_p)\to H^2(G,\mu_p)$.
By the functoriality of the cup product \cite{NeukirchSchmidtWingberg}*{Prop.\ 1.4.2}, this can be restated as
\begin{equation}
\label{cohom identity for Bock}
\Bock_{p,G}\cup \id=\id\cup(i_*\circ\kappa\circ i\inv)
\end{equation}
as maps $H^1(G)\tensor H^0(G)\to H^2(G)$.
This cohomological identity was a main ingredient in the restriction on the group-theoretic structure of absolute Galois groups given in \cite{EfratMinac11}*{Main Theorem}.
It can be expressed in terms of  the pulbacks $\rho^*\gam_{2,1}$;
namely, for every continuous homomorphism $\rho\colon G\to\dbU^{[2,p]}$, Example \ref{example Bockstein} and (\ref{cohom identity for Bock}) (applied on $(\pr_{1,2}\circ\rho)\tensor\bar1$) give
\[
\rho^*\gam_{2,1}=\Bock_{p,G}(\pr_{1,2}\circ\rho)=(\pr_{1,2}\circ\rho)\cup i_*(\kappa(\zeta)).
\]
Therefore, in the notation of Definition \ref{basic definitions},
\[
H^2(G)_{2,1}=H^1(G)\cup i_*(\kappa(\zeta)).
\]

\end{exam}

\begin{exam}
\rm
When $n=s=2$, we generally have $\rho^*\gam_{2,2}=(\pr_{1,2}\circ\rho)\cup(\pr_{2,3}\circ\rho)$, by Example \ref{example Massey}(iv).
\end{exam}

\begin{exam}
\rm
Let  $n=s=3$ and $\dbU=\dbU_3(\dbZ/p)$.
It was shown in \cite{Matzri14}, \cite{EfratMatzri17} and \cite{MinacTan16} (following earlier works by Hopkins, Wickelgren \cite{HopkinsWickelgren15}, Min\'a\v c and T\^an) that if a $3$-fold Massey product in $H^2(G)=H^2(G_F)$ is  nonempty, then it contains $0$.
This imposes another restriction on the possible group-theoretic structure of absolute Galois groups - see \cite{MinacTan17}*{\S7}.
The following proposition interprets this fact in terms of the pulbacks $\rho^*\gam_{3,3}$:
\end{exam}

\begin{prop}
Let $\chi_1,\chi_2\in H^1(G)$.
The set $\chi_1\cup H^1(G)+\chi_2\cup H^1(G)$ consists exactly of all the pullbacks $\rho^*\gam_{3,3}$, where $\rho\colon G\to\dbU^{[3,p]}$ is a continuous homomorphism such that $\chi_1=\pr_{1,2}\circ\rho$ and $\chi_2=\pr_{3,4}\circ\rho$.

Consequently, $H(G)_{3,3}$ is the set $H^1(G)\cup H^1(G)+H^1(G)\cup H^1(G)$ of all sums of two cup products.
\end{prop}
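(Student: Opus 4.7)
The plan is to reduce the first assertion to the vanishing theorem for triple Massey products in absolute Galois groups that was quoted just above the proposition. By parts (i) and (ii) of Example \ref{example Massey}, as $\rho\colon G\to\dbU^{[3,p]}$ ranges over continuous homomorphisms with $\pr_{1,2}\circ\rho=\chi_1$ and $\pr_{3,4}\circ\rho=\chi_2$, the pullbacks $\rho^*\gam_{3,3}$ sweep out exactly the union $\bigcup_\psi\langle\chi_1,\psi,\chi_2\rangle$, where $\psi$ runs over those elements of $H^1(G)$ arising as $\pr_{2,3}\circ\rho$ for some such $\rho$ (equivalently, those $\psi$ for which the triple Massey product is nonempty).

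The first step is to verify that this union is nonempty: the choice $\psi=0$ automatically satisfies $\chi_1\cup\psi=0=\psi\cup\chi_2$, and one builds a valid $\rho$ by taking the remaining entries $\pr_{1,3}\circ\rho$ and $\pr_{2,4}\circ\rho$ to be trivial, so $\langle\chi_1,0,\chi_2\rangle\neq\emptyset$. Next, by part (v) of Example \ref{example Massey}, every nonempty $\langle\chi_1,\psi,\chi_2\rangle$ is a coset of the indeterminacy $\chi_1\cup H^1(G)+\chi_2\cup H^1(G)$, an object depending only on the outer characters. The cited vanishing theorem then forces every such nonempty Massey product to contain $0$, and hence to equal that very indeterminacy. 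Taking the union over the admissible $\psi$ therefore collapses to exactly $\chi_1\cup H^1(G)+\chi_2\cup H^1(G)$, proving the first assertion.

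The consequence is then a matter of unwinding definitions: by Definition \ref{basic definitions}, $H^2(G)_{3,3}$ is the union of $\rho^*\gam_{3,3}$ over all continuous homomorphisms $\rho$, which partitions according to the pair $(\pr_{1,2}\circ\rho,\pr_{3,4}\circ\rho)=(\chi_1,\chi_2)$ and therefore equals $\bigcup_{\chi_1,\chi_2\in H^1(G)}\bigl(\chi_1\cup H^1(G)+\chi_2\cup H^1(G)\bigr)$, i.e.\ the set of all sums of two cup products in $H^2(G)$. The main obstacle is, in practice, entirely absorbed in the invocation of the deep vanishing theorem of Hopkins--Wickelgren, Min\'a\v c--T\^an, Matzri, and Efrat--Matzri; once that is granted, the remainder of the argument is formal manipulation of Massey products and their indeterminacies.
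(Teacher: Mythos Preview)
Your proof is correct and follows essentially the same approach as the paper's: both reduce the first assertion to the triple Massey product vanishing theorem via Example \ref{example Massey}(i),(v), and both verify that $\langle\chi_1,0,\chi_2\rangle$ is populated by the same block-diagonal construction. The only cosmetic difference is that the paper lifts that construction all the way to $\dbU$ (so that $0\in\langle\chi_1,0,\chi_2\rangle$ follows directly from Example \ref{example Massey}(iii), without appealing to the vanishing theorem for the converse inclusion), whereas you stop at $\dbU^{[3,p]}$ and invoke the vanishing theorem once more; either way the argument goes through.
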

\begin{proof}
Let $\rho $ be as in the first assertion.
By Example \ref{example Massey}(i),  the Massey product $\langle\pr_{1,2}\circ\rho,\pr_{2,3}\circ\rho,\pr_{3,4}\circ\rho\rangle$ contains $\rho^*\gam_{3,3}$, and in particular, is nonempty.
By the above result, it contains $0$.
In view of Example \ref{example Massey}(iv), it coincides with its indeterminicity, and in particular
\[
\rho^*\gam_{3,3}\in (\pr_{1,2}\circ\rho)\cup H^1(G)+(\pr_{3,4}\circ\rho)\cup H^1(G)
=\chi_1\cup H^1(G)+\chi_2\cup H^1(G).
\]

For the converse,  note that since $\chi_1,\chi_2\colon G\to\dbZ/p$ are continuous homomorphisms, the map
\[
\rho'=\begin{bmatrix}
1&\chi_1&0&0\\
0&1&0&0\\
0&0&1&\chi_2\\
0&0&0&1
\end{bmatrix}\colon G\to\dbU
\]
is also a continuous homomorphism.
This shows that $0\in \langle\chi_1,0,\chi_2\rangle$ (Example  \ref{example Massey}(iii)), so again
\[
\langle\chi_1,0,\chi_2\rangle=\chi_1\cup H^1(G)+\chi_2\cup H^1(G).
\]
Hence, by Example  \ref{example Massey}(i), every element of $\chi_1\cup H^1(G)+\chi_2\cup H^1(G)$ has the form $\rho^*(\gam_{3,3})$ for some continuous  homomorphism $\rho\colon G\to\dbU^{[3,p]}$ such that $\chi_1=\pr_{1,2}\circ\rho$ and $\chi_2=\pr_{3,4}\circ\rho$.

\medskip

The second assertion of the proposition follows from the first one.
\end{proof}

\begin{rems}
\rm
(1) \quad
In the previous discussion one can replace $G_F$ by the maximal pro-$p$ Galois group $G_F(p)=\Gal(F(p)/F)$ of $F$, where $F(p)$ is the maximal pro-$p$ Galois extension of $F$.
Indeed, $\inf\colon H^i(G_F(p))\to H^i(G_F)$ is an isomorphism for every $i$ \cite{CheboluEfratMinac12}*{Remark 8.2}.
Moreover, there is a relative Kummer map $F^\times\to H^1(G_F(p),\mu_p)$ which is a connecting homomorphism arising from the exponentiation by $p$ map $F(p)^\times\to F(p)^\times$, and it commutes with $\kappa$ and the inflation map $H^1(G_F(p),\mu_p)\to H^1(G_F,\mu_p)$.

\smallskip

\noindent
(2) \quad
It will be interesting to find in the absolute Galois group case explicit decompositions of $\rho^*\gam_{n,s}$ as a sum of cup products for other values of $n,s$.
\end{rems}

\begin{bibdiv}
\begin{biblist}

\bib{CheboluEfratMinac12}{article}{
   author={Chebolu, Sunil K.},
   author={Efrat, Ido},
   author={Min{\'a}{\v{c}}, J{\'a}n},
   title={Quotients of absolute Galois groups which determine the entire Galois cohomology},
   journal={Math. Ann.},
   volume={352},
   date={2012},
   pages={205--221},
}

\bib{Dwyer75}{article}{
author={Dwyer, W. G.},
title={Homology, Massey products, and maps between groups},
journal={J.\ Pure Appl.\ Algebra},
volume={6},
date={1975},
pages={177\ndash190},
}

\bib{Efrat14}{article}{
author={Efrat, Ido},
title={The Zassenhaus filtration, Massey products, and representations of profinite groups},
journal={Adv.\ Math.},
volume={263},
date={2014},
pages={389\ndash411},
}

\bib{Efrat17}{article}{
author={Efrat, Ido},
title={The cohomology of canonical quotients of free groups and Lyndon words},
journal={Documenta Math.},
volume={22},
date={2017},
pages={973\ndash997},
}

\bib{EfratMatzri17}{article}{
author={Efrat, Ido},
author={Matzri, Eliyahu},
title={Triple Massey products and absolute Galois groups},
journal={J. Eur. Math Soc.\ (JEMS)},
volume={19},
date={2017},
pages={3629\ndash3640},
}

\bib{EfratMinac11}{article}{
label={EMi11},
   author={Efrat, Ido},
   author={Min\'a\v c, J\'an},
   title={On the descending central sequence of absolute Galois groups},
   journal={Amer. J. Math.},
   volume={133},
   date={2011},
   pages={1503\ndash1532},
 }
		
\bib{EfratMinac17}{article}{
label={EMi17},
   author={Efrat, Ido},
   author={Min\'a\v c, J\'an},
   title={Galois groups and cohomological functors},
   journal={Trans.\ Amer.\ Math.\ Soc.},
   volume={369},
   date={2017},
   pages={2697\ndash2720},
 }

\bib{Fine47}{article}{
author={Fine, N.J.},
title={Binomial coefficients modulo a prime},
journal={Amer.\ Math.\ Monthly},
volume={54},
date={1947},
pages={589\ndash592},
}

\bib{FriedJarden08}{book}{
   author={Fried, Michael D.},
   author={Jarden, Moshe},
   title={Field arithmetic},
   edition={Third edition},
   publisher={Springer, Berlin},
   date={2008},
   pages={xxiv+792},
}

\bib{Hoechsmann68}{article}{
author={Hoechsmann, Klaus},
title={Zum Einbettungsproblem},
journal={J.\ reine angew.\ Math.},
volume={229},
date={1968},
pages={81\ndash106},
}

\bib{HopkinsWickelgren15}{article}{
author={Hopkins, Michael},
author={Wickelgren, Kirsten},
title={Splitting varieties for triple Massey products},
journal={J. Pure Appl. Algebra},
volume={219},
date={2015},
pages={1304\ndash1319},
}

\bib{Labute67}{article}{
   author={Labute, John P.},
   title={Classification of Demushkin groups},
   journal={Canad. J. Math.},
   volume={19},
   date={1967},
   pages={106--132},
}

\bib{Ledet05}{book}{
  author={Ledet, Arne},
  title={Brauer Type Embedding Problems},
  series={Fields Institute Monographs},
  volume={21},
  publisher={American Mathematical Society},
  place={Providence, RI},
  date={2005},
  pages={viii+171},
}

\bib{LodayVallette12}{book}{
   author={Loday, Jean-Louis},
   author={Vallette, Bruno},
   title={Algebraic operads},
   publisher={Springer, Heidelberg},
   date={2012},
   pages={xxiv+634},
}

\bib{Matzri14}{article}{
author={Matzri, Eliyahu},
title={Triple Massey products in Galois cohomology},
eprint={arXiv:1411.4146},
date={2014},
}

\bib{MinacTan16}{article}{
author={Min\'a\v c, J\'an},
author={T\^an, Nguyen Duy},
title={Triple Massey products vanish over all fields},
journal={J.\ London Math.\ Soc.},
volume={94},
date={2016},
pages={909\ndash932},
}

\bib{MinacTan17}{article}{
   author={Min\'a\v c, J\'an},
   author={T\^an, Nguyen Duy},
   title={Triple Massey products and Galois theory},
   journal={J. Eur. Math. Soc. (JEMS)},
   volume={19},
   date={2017},
   pages={255--284},
}

\bib{NeukirchSchmidtWingberg}{book}{
author={Neukirch, J},
author={Schmidt, Alexander},
author={Wingberg, Kay},
title={Cohomology of Number Fields},
edition={Second edition},
publisher={Springer},
date={2008},
}

\bib{Radford79}{article}{
author={Radford, David E.},
title={A natural ring basis for the shuffle algebra and an application to group schemes},
journal={J.\  Algebra},
volume={58},
date={1979},
pages={432\ndash454},
}

\bib{Reutenauer93}{book}{
   author={Reutenauer, Christophe},
   title={Free Lie algebras},
   series={London Mathematical Society Monographs. New Series},
   volume={7},
   note={Oxford Science Publications},
   publisher={The Clarendon Press, Oxford University Press, New York},
   date={1993},
   pages={xviii+269},
}

\bib{RotaSagan80}{article}{
author={Rota, Gian-Carlo},
author={Sagan, Bruce},
title={Congruences derived from group action},
journal={Europ.\ J.\ Combinatorics},
volume={1},
date={1980},
pages={67\ndash76},
}

\bib{Sawin18}{misc}{
    title={Order of unipotent matrices over $\mathbb{Z}/q\mathbb{Z}$},
    author={Sawin, Will},
    note={URL: https://mathoverflow.net/q/294231 (version: 2018-03-02)},
    eprint={https://mathoverflow.net/q/294231},
    organization={MathOverflow},
    date={2018}
}

\bib{Topaz16}{article}{
author={Topaz, Adam},
title={Reconstructing function fields from rational quotients of mod-$l$ Galois groups},
journal={Math.\ Ann.},
volume={366},
date={2016},
pages={337--385},
}

\end{biblist}
\end{bibdiv}

\end{document}